\begin{document}

\mainmatter  

\title{A density-dependent metapopulation model: Extinction, persistence and source-sink dynamics}
\titlerunning{A density-dependent metapopulation model}

\author{\normalsize Blake McGrane-Corrigan\thanks{\textit{Corresponding Author (blake.mcgranecorrigan.2017@mumail.ie) \hfill}}, Oliver Mason, Rafael de Andrade Moral}

\authorrunning{A density-dependent metapopulation model}

\institute{\normalsize Department of Mathematics and Statistics,\\
Maynooth University, Kildare, Ireland}

\toctitle{A density-dependent metapopulation model}
\tocauthor{}

\maketitle

\begin{abstract}
We consider a nonlinear coupled discrete-time model of population dynamics. This model describes the movement of populations within a heterogeneous landscape, where the growth of subpopulations are modelled by (possibly different) bounded Kolmogorov maps and coupling terms are defined by nonlinear functions taking values in $(0,1)$. These couplings describe the proportions of individuals dispersing between regions. We first give a brief survey of similar discrete-time dispersal models. We then derive sufficient conditions for the stability/instability of the extinction equilibrium, for the existence of a positive fixed point and for ensuring uniform strong persistence. Finally we numerically explore a planar version of our model in a source-sink context, to show some of the qualitative behaviour that the model we consider can capture: for example, periodic behaviour and dynamics reminiscent of chaos.

\vspace{3mm}

\textbf{Keywords}: population dynamics, nonlinear matrix model, fixed point, bifurcation diagram, Kolmogorov map, dispersal, stability.

\end{abstract}

\section{Introduction}
Habitat fragmentation caused by processes such as deforestation, urbanisation and agriculture, has caused a diverse range of species to become so-called \textit{metapopulations}, sub-groups within a population spread out across many regions in a patchy landscape \cite{cayuela, osloy, scriven}. A key ecological question is then how dispersal between these regions impacts such species' population dynamics and if this can promote coexistence among regions. For example, given that a subpopulation on one or more of the isolated regions goes extinct, one may be interested in knowing if the coupled system is persistent, if such a system possesses a fixed point or if there exists a locally/globally stable positive fixed point. In population dynamics coupling regional dynamics via dispersal has attracted a lot of attention in recent literature, albeit primarily in continuous time. Recently there has been more interest in discrete-time dispersal models \cite{bajo, kirkland, ruiz1, ruiz2, ecomod}. In many models the proportion dispersing is assumed to be constant. However it has been found that this can also largely depend on the density of the region from where such movement is taking place \cite{harman, travis}.

\vspace{3mm}

In this paper we will consider a model closely related to that of \cite{yakubo} and \cite{ruiz1}, which includes nonlinear dispersal rates, where each regional map can come from a different class of population maps and where there is a cost to dispersal. This builds on the work of \cite{ecomod}, where the authors considered a planar Ricker model with density-dependent (cost-free) dispersal in the context of invasion dynamics. Here, we will consider an arbitrary, finite number of regions, where regional dynamics can be non-identical and come from a more general class of population maps than in \cite{ecomod}. We will assume that regional dynamics are modelled by bounded population maps and determine the conditions that ensure stability and persistence after regions are connected by dispersal. We will first give some preliminary results and background. We will then introduce our model and state our main theoretical and numerical results.

\section{Preliminaries}
In this section we fix some notation and terminology, and recall key definitions and results used throughout the paper. 

\vspace{3mm}

For $x \in \mathbb{R}^n$ we write $x \geq 0$ $(\gg 0)$ to mean that $x_i \geq 0$ $(>0)$ \mbox{$\forall$} $ i \in \{1,...,n\}$. Denote by $\mathbb{R}_+^n := \{x \in \mathbb{R}^n : x \geq 0 \}$ the standard \textit{non-negative cone} in $\mathbb{R}^n$. A similar notation is used for matrices in $\mathbb{R}^{n \times n}$ and $\mathbb{R}^{n \times n}_+$. For $A,B \in \mathbb{R}^{n \times n}$ we write $A \geq B$ if $A-B \in \mathbb{R}^{n \times n}_+$. Denote by $A^T$ the transpose of $A$ and $\sigma(A)$ the spectrum of $A$ (the set of its eigenvalues). Denote by $F^{m} = F \circ F \circ \ \cdots\  \circ F$ ($m$ times) the \(m\)-fold composition of a map \(F:\mathbb{R}^{n} \rightarrow \mathbb{R}^{n}\) with itself, \(m \in \mathbb{Z}_{+}\). The \textit{Jacobian} of a vector-valued function $f:\mathbb{R}^n \rightarrow \mathbb{R}^n$ at $a \in \mathbb{R}^n$ is the matrix \[f'(a) := \left(\frac{\partial f_i}{\partial x_j}(a)\right), \ i,j \in \{1,...,n\},\] provided each partial derivative of $f$ exists. A matrix $A=(a_{ij}) \in \mathbb{R}^{n \times n}$ is said to be \textit{irreducible} if there is no proper subset $J \subset \{1,...,n\}$ for which $a_{ij} = 0$ \mbox{$\forall$ }$i \in J,j \in \{1,...,n\} \backslash J$. 
$A \in \mathbb{R}^{n \times n}$ is said to be \textit{column substochastic} if $\mathbbm{1}^TA < \mathbbm{1}^T$, where $\mathbbm{1} := (1,...,1)^T$. $A \in \mathbb{R}^{n \times n}$ is said to be \textit{primitive} if $\exists \ s \in \mathbb{N}$ such that $A^s \gg 0.$ The \textit{spectral radius} and the \textit{spectral abscissa} of $A \in \mathbb{R}^{n \times n}$ are respectively defined as \begin{eqnarray*}
    \rho(A) &:=& \max\{ |\lambda| : \lambda \in \sigma(A) \} \\
    \mu(A) &:=& \max\{ \mbox{Re}(\lambda) : \lambda \in \sigma(A) \}.
\end{eqnarray*} For more background on matrix theory see \cite{hj}. 

\vspace{3mm}

Given the initial value problem \begin{equation} \label{IVP}
\begin{aligned}
    &x(t+1) = f(x(t)), \\ 
    &x(0)=x_0 \in \mathbb{R}^n, 
\end{aligned} 
\end{equation} for $x \in \mathbb{R}^n$, $f:\mathbb{R}^n \rightarrow \mathbb{R}^n$ a smooth function, let \(x(t,x_{0})\) denote the solution of (\ref{IVP}) at time \(t\) corresponding to
\(x(0) = x_{0}\). An \textit{equilibrium} of such an initial value problem is a solution of \(f(x)=x\). An equilibrium \(x^{\star}\) is (\textit{Lyapunov}) \textit{stable} if for any \(\epsilon > 0\) $\mbox{there exists}$ a \(\delta > 0\) such that \[\left\| x_{0} - x^{\star} \right\| < \delta \implies \left\| x( t, x_0 ) - x^{\star} \right\| < \epsilon \mbox{ $\forall$ } t\geq0.\] If, in addition, there exists some $R>0$, such that $x(t, x_0)\rightarrow x^{*}$ as $t \rightarrow \infty$ for any solution with $\|x_0\|<R$, the equilibrium is \textit{locally asymptotically stable} (\textit{LAS}). If this holds for any $R>0$, it is said to be \textit{globally asymptotically stable} (\textit{GAS}). It is common to assume that $f(0)=0$ and thus $0$ is an equilibrium of (\ref{IVP}). In the context of population dynamics, this trivial equilibrium corresponds to extinction.

\vspace{3mm}

Assume that (\ref{IVP}) is a \textit{positive system}, i.e. $x_0 \in \mathbb{R}_+^n \implies x(t,x_{0}) \in \mathbb{R}_+^n$. Given a \textit{persistence function} \(\eta:\mathbb{R}_+^{n} \rightarrow \left\lbrack 0,\infty \right)\), the system (\ref{IVP}) is \textit{uniformly weakly} $\eta$-\textit{persistent}, if $\mbox{there exists}$ some \(\epsilon > 0\), such that \begin{eqnarray} \label{limsup}
    \eta(x_{0} ) > 0 \implies \limsup_{t \rightarrow \infty} \eta \left( x( t, x_0 ) \right) > \epsilon.
\end{eqnarray} If the $\limsup$ is replaced with $\liminf$ in (\ref{limsup}) it is referred to as \textit{uniform} \textit{strong} \(\eta\)-\textit{persistence}. For more background on persistence theory see \cite{st}.

\vspace{3mm}

The next result is the well-known Perron-Frobenius theorem.
 
\begin{theorem} \label{pf} \cite{hj} Let $A \in \mathbb{R}_{+}^{n \times n}$ be irreducible. Then, $\rho(A)>0$ is an eigenvalue of $A$, and $\mbox{there exists}$ a unique (up to scalar multiple) vector $v \gg 0$ $(w^T \gg 0)$ such that $Av = \rho(A)v$ $(w^TA = \rho(A)w^T)$.
\end{theorem}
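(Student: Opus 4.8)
The plan is to prove the statement by combining a fixed-point argument for existence, a connectivity lemma for strict positivity, and a comparison against the left eigenvector to identify the distinguished eigenvalue with $\rho(A)$. Throughout I take $n \geq 2$, so that irreducibility precludes $A = 0$ and hence any zero column. The engine of the whole argument is the observation that an irreducible $A \in \mathbb{R}^{n\times n}_+$ satisfies $(I+A)^{n-1} \gg 0$; this is a combinatorial fact, following because irreducibility is equivalent to strong connectivity of the directed graph associated with $A$, so that every ordered pair of indices is joined by a path of length at most $n-1$.

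First I would establish existence of a nonnegative eigenpair. On the compact convex simplex $\Delta := \{ x \in \mathbb{R}^n_+ : \mathbbm{1}^T x = 1\}$, consider the map $T(x) := Ax / (\mathbbm{1}^T A x)$. For $x \geq 0$ with $x \neq 0$, irreducibility forces $Ax \neq 0$ (a vanishing $Ax$ would make the columns indexed by $\{ j : x_j > 0\}$ identically zero, contradicting irreducibility), so $\mathbbm{1}^T A x > 0$ and $T$ is a continuous self-map of $\Delta$. Brouwer's fixed-point theorem then yields $v \in \Delta$ with $Av = \lambda v$, where $\lambda := \mathbbm{1}^T A v > 0$. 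To upgrade $v \geq 0$ to $v \gg 0$, I would apply the connectivity lemma: from $(I+A) v = (1+\lambda) v$ we obtain $(I+A)^{n-1} v = (1+\lambda)^{n-1} v$, and since $(I+A)^{n-1} \gg 0$ while $v \geq 0$, $v \neq 0$, the left-hand side is strictly positive; hence $v \gg 0$.

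Next I would identify $\lambda$ with the spectral radius. Applying the existence step to the (also irreducible) matrix $A^T$ produces $w \gg 0$ and $s > 0$ with $w^T A = s\, w^T$. For an arbitrary eigenvalue $\mu \in \sigma(A)$ with eigenvector $z$, the triangle inequality together with $A \geq 0$ gives the entrywise bound $|\mu|\,|z| = |Az| \leq A |z|$; multiplying on the left by $w^T \gg 0$ and using $w^T |z| > 0$ yields $|\mu| \leq s$. Thus $\rho(A) \leq s$, and since $s \in \sigma(A^T) = \sigma(A)$ we also have $s \leq \rho(A)$, whence $s = \rho(A)$. The symmetric argument, pairing an eigenvector of $A^T$ against the right eigenvector $v$, gives $\lambda = \rho(A^T) = \rho(A)$. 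In particular $\rho(A) = \lambda > 0$ is an eigenvalue with positive eigenvector $v$, and the same construction applied to $A^T$ supplies the positive left eigenvector $w^T$.

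Finally, for uniqueness up to scalar, suppose $u \gg 0$ is a second positive eigenvector for $\rho(A)$ and set $c := \min_i u_i/v_i$, attained at some index $k$. Then $\tilde u := u - c v$ satisfies $A \tilde u = \rho(A)\,\tilde u$, $\tilde u \geq 0$, and $\tilde u_k = 0$. Were $\tilde u \neq 0$, the positivity step would force $\tilde u \gg 0$, contradicting $\tilde u_k = 0$; hence $\tilde u = 0$ and $u = c v$, and the left eigenvector is handled identically via $A^T$. I expect the main obstacle to be the identification $\lambda = \rho(A)$: the fixed-point theorem only delivers \emph{some} positive eigenvalue, and it is the comparison $|Az| \leq A|z|$ paired against the strictly positive left eigenvector that forces this eigenvalue to dominate the entire spectrum. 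The lemma $(I+A)^{n-1}\gg 0$, though elementary, is the other load-bearing ingredient, underpinning both the strict positivity and the uniqueness arguments.
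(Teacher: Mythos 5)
Your proof is correct, but there is nothing in the paper to compare it against: the paper states this result as the classical Perron--Frobenius theorem and cites Horn and Johnson for its proof, offering no argument of its own. Your route is one of the standard self-contained ones, and each step checks out: Brouwer's fixed-point theorem applied to $T(x)=Ax/(\mathbbm{1}^TAx)$ on the probability simplex gives existence (the denominator is indeed nonvanishing there because, for $n\ge 2$, irreducibility rules out zero columns); the lemma $(I+A)^{n-1}\gg 0$ correctly upgrades $v\ge 0$ to $v\gg 0$ and again drives the uniqueness argument via $\tilde u = u - cv$ with $c=\min_i u_i/v_i$; and the entrywise comparison $|\mu|\,|z|\le A|z|$ paired with a strictly positive left (resp.\ right) eigenvector correctly forces $s=\rho(A)$ and $\lambda=\rho(A)$. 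By contrast, the cited reference proves Perron's theorem for entrywise positive matrices first, by comparison and limiting arguments with no topological fixed-point theorem, and then extends it to irreducible nonnegative matrices through the same lemma $(I+A)^{n-1}\gg 0$; your approach buys a much quicker existence proof at the price of invoking Brouwer, while the textbook route is more elementary and also delivers extras (algebraic simplicity, the Collatz--Wielandt characterization) that the statement here does not require. Two small remarks: once $w\gg 0$ with $w^TA=s\,w^T$ is in hand, the identity $\lambda\, w^Tv = w^TAv = s\, w^Tv$ together with $w^Tv>0$ gives $\lambda=s$ in one line, so your second spectrum-comparison argument is dispensable; and your standing restriction to $n\ge 2$ is the right call, since for $n=1$ the conclusion $\rho(A)>0$ can only fail for $A=[0]$, a degenerate case that irreducibility is conventionally understood to exclude.
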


A simple consequence of Theorem \ref{pf} is recalled in the following lemma.
 
\begin{lemma} \label{lem1} Let $A \in \mathbb{R}_{+}^{n \times n}$. Then
    \[\rho(A)< 1 \ (\rho(A)>1) \iff \exists \ v \gg 0 \mbox{ such that } Av \ll v \ (Av\gg v).\]

\end{lemma}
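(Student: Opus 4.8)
The plan is to prove the two stated equivalences by splitting into four separate implications, using Theorem~\ref{pf} for the forward (``only if'') directions and elementary monotonicity/norm estimates for the backward (``if'') directions. The guiding observation is that, because $A \geq 0$ and $v \gg 0$, the vector inequalities $Av \ll v$ and $Av \gg v$ are equivalent to the scalar bounds $\max_i (Av)_i/v_i < 1$ and $\min_i (Av)_i/v_i > 1$, respectively.

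\emph{Backward directions.} Suppose first that $Av \gg v$ for some $v \gg 0$, and set $\lambda := \min_i (Av)_i/v_i > 1$, so that $Av \geq \lambda v$. Since $A \geq 0$ is monotone on the cone, a straightforward induction gives $A^k v \geq \lambda^k v \gg 0$ for all $k$; applying the monotone norm $\|\cdot\|_\infty$ yields $\|A^k\|_\infty \|v\|_\infty \geq \|A^k v\|_\infty \geq \lambda^k \|v\|_\infty$, hence $\|A^k\|_\infty \geq \lambda^k$, and Gelfand's formula $\rho(A) = \lim_{k\to\infty} \|A^k\|_\infty^{1/k}$ gives $\rho(A) \geq \lambda > 1$. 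For the companion direction, suppose $Av \ll v$ with $v \gg 0$. Working in the weighted sup-norm $\|x\|_v := \max_i |x_i|/v_i$, the induced operator norm of a non-negative matrix is $\|A\|_v = \max_i (Av)_i/v_i$, which here is strictly below $1$; since $\rho(A) \leq \|A\|_v$ for any induced norm, we conclude $\rho(A) < 1$.

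\emph{Forward directions.} Here I would invoke Theorem~\ref{pf}. When $A$ is irreducible the theorem supplies $v \gg 0$ with $Av = \rho(A)v$, and scaling by $\rho(A) < 1$ (resp.\ $\rho(A) > 1$) gives $Av = \rho(A)v \ll v$ (resp.\ $\gg v$) at once -- this is the ``simple consequence'' advertised. To reach a general, possibly reducible, $A \geq 0$ in the contracting case $\rho(A) < 1$, I would perturb: set $A_\varepsilon := A + \varepsilon\,\mathbbm{1}\mathbbm{1}^T \gg 0$, which is strictly positive, hence irreducible, and apply Theorem~\ref{pf} to get $v_\varepsilon \gg 0$ with $A_\varepsilon v_\varepsilon = \rho(A_\varepsilon) v_\varepsilon$. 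Because the spectral radius is continuous and monotone in the entries, $\rho(A_\varepsilon) \downarrow \rho(A) < 1$ as $\varepsilon \downarrow 0$, so for small $\varepsilon$ one has $A v_\varepsilon \leq A_\varepsilon v_\varepsilon = \rho(A_\varepsilon) v_\varepsilon \ll v_\varepsilon$, and $v = v_\varepsilon$ works.

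\emph{Main obstacle.} The delicate point is the forward implication in the expanding case $\rho(A) > 1$ for a merely non-negative $A$: the perturbation $A_\varepsilon \geq A$ yields only the upper bound $A v_\varepsilon \leq \rho(A_\varepsilon) v_\varepsilon$, which is the wrong direction, and in fact no $v \gg 0$ with $Av \gg v$ need exist (for instance $A = \mathrm{diag}(2,0)$ has $\rho(A) = 2$ yet forces the second coordinate of $Av$ to vanish, so $Av \gg v$ is impossible). This implication therefore genuinely rests on irreducibility of $A$, where Theorem~\ref{pf} applies directly, and I would accordingly state the lemma under that hypothesis -- consistent with its framing as a corollary of Theorem~\ref{pf} and with the dispersal-coupled matrices relevant to the model. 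Beyond Perron--Frobenius, the only nontrivial ingredient is the continuity and monotonicity of the spectral radius invoked in the perturbation step.
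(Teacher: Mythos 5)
Your proposal is correct, and it actually does more than the paper, which offers no proof at all: Lemma~\ref{lem1} is simply ``recalled'' there as a consequence of Theorem~\ref{pf}. Your three proved implications are all sound --- the weighted sup-norm estimate $\rho(A) \leq \|A\|_v = \max_i (Av)_i/v_i < 1$ handles $Av \ll v \Rightarrow \rho(A) < 1$; the Gelfand-formula bound $\|A^k\|_\infty \geq \lambda^k$ obtained from $Av \geq \lambda v$, $\lambda > 1$, handles $Av \gg v \Rightarrow \rho(A) > 1$; and the strictly positive perturbation $A_\varepsilon = A + \varepsilon \mathbbm{1}\mathbbm{1}^T$ together with continuity and monotonicity of the spectral radius handles $\rho(A) < 1 \Rightarrow \exists\, v \gg 0,\ Av \ll v$ without any irreducibility assumption. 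Most importantly, your counterexample $A = \mathrm{diag}(2,0)$ is a genuine and correct observation about the statement itself: for a merely non-negative (reducible) matrix, $\rho(A) > 1$ does \emph{not} imply the existence of $v \gg 0$ with $Av \gg v$, so the lemma as literally written (with no irreducibility hypothesis on $A$) is false in that one direction. Your repair --- retaining the irreducibility hypothesis of Theorem~\ref{pf}, under which the Perron eigenvector $v \gg 0$ with $Av = \rho(A)v$ disposes of both forward directions at once --- is the right one, and it is harmless to the rest of the paper: every invocation of the lemma (in Proposition~\ref{globalext}, Theorem~\ref{fix} and Theorem~\ref{persthem}) applies it to $A(0)$ or $A(0)^T$ where $A(0) \gg 0$, which is primitive and hence irreducible. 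In short, where the paper asserts, you prove, and you correctly detect that the asserted generality is slightly too great.
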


We will now gather two results on persistence from \cite{st}. Let \begin{eqnarray} \label{genmatrixmodel} x(t+1) = H(x(t)) := A(x(t))x(t), \ \ x(0)=x_0 \in \mathbb{R}_+^{n \times n}\end{eqnarray} be a general nonlinear matrix model, where $A:\mathbb{R}_+^n \rightarrow \mathbb{R}_+^{n \times n}$.


\begin{theorem} \label{stthm} \cite{st} Let
\(H:\mathbb{R}_{+}^{n} \rightarrow \mathbb{R}_{+}^{n}\) be
differentiable. Let \(\eta (x) = \left\| x \right\|\), where
\(\left\| \cdot \right\|\) is any norm on \(\mathbb{R}_{+}^{n}\). Suppose that the following hold:
\begin{enumerate}
\def\labelenumi{\arabic{enumi}.}
\item
  \(H\left(\mathbb{R}_{+}^{n}\backslash\text{\{}0\}\right) \subset \mathbb{R}_{+}^{n}\backslash\text{\{}0\} \);
\item $\mbox{there exists}$ \(r_{0} > 1\) and \(v \gg 0\) such that $H'\left( 0 \right)^{T}v \geq r_{0}v$;
  \item $\mbox{there exists}$ \(M > 0\) such that \mbox{$\forall$} \(x(0) \in \mathbb{R}_+^{n}\) $\mbox{there exists}$ \(T \in \mathbb{N}\) such that \(\|x(t,x_{0})\| \leq M\) \mbox{$\forall$} \(t \geq T\).
\end{enumerate} Then, $H$ is uniformly \(\eta\)-persistent. Let \(\hat{\eta}\left(x \right) = \text{min}_{i}\ x_{i}\) and define \begin{eqnarray*} X_{0} := \{x_{0} \in \mathbb{R}_{+}^{n} :\hat{\eta}\left(x(t,x_{0}) \right) = 0, \ \forall \ t \geq 0.\}.\end{eqnarray*} In addition, if for all \(c > 0\) $\mbox{there exists}$ some \(s > 0\) such that $H^s\left( x \right) \gg 0 \ \forall\ x \in \mathbb{R}_+^{n}, \ 0 < \|x(s)\| \leq c$, then $\mbox{there exists}$ some \(\epsilon > 0\) such that \[\liminf_{t \rightarrow \infty} \hat{\eta} \left( x( t, x_0 ) \right) \geq \epsilon\] for any \(x_0 \in \mathbb{R}_{+}^{n}\backslash X_{0}\).
\end{theorem}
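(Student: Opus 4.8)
The plan is to prove the statement in two stages. First I would show that $H$ is uniformly weakly $\eta$-persistent for $\eta=\|\cdot\|$, then upgrade this to the uniform strong persistence asserted by using the eventual uniform boundedness in hypothesis~3; the final claim concerning $\hat\eta$ I would treat separately from the interior-reaching hypothesis. Throughout I use that $H(0)=0$ (the extinction equilibrium), so that $H'(0)$ governs the dynamics near the origin.

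For the weak-persistence step, the natural device is the linear functional $\phi(x):=v^{T}x$ built from the vector $v\gg0$ of hypothesis~2. Writing $H(x)=H'(0)x+R(x)$ with $R(x)=o(\|x\|)$ and using $H'(0)^{T}v\ge r_{0}v$ gives
\[
\phi(H(x))=\big(H'(0)^{T}v\big)^{T}x+v^{T}R(x)\ge r_{0}\,\phi(x)+v^{T}R(x).
\]
Because $v\gg0$ and $x\ge0$ force $\phi(x)\ge c\|x\|$ for some $c>0$, the remainder is dominated for small $x$, so there exist a radius $\rho_{0}>0$ and a constant $r_{1}\in(1,r_{0})$ with $\phi(H(x))\ge r_{1}\phi(x)$ whenever $x\ge0$ and $\|x\|\le\rho_{0}$. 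I would then argue by contradiction: were $H$ not uniformly weakly persistent, then for any prescribed $\epsilon>0$ there is an orbit with $x_{0}\ne0$ and $\limsup_{t}\|x(t)\|\le\epsilon$; taking $\epsilon<\rho_{0}$ forces $\|x(t)\|\le\rho_{0}$ for all large $t$, and iterating the inequality gives $\phi(x(t))\ge r_{1}^{\,t-T}\phi(x(T))\to\infty$, while $\phi(x(t))\le\|v\|\,\|x(t)\|$ remains bounded. Here $\phi(x(T))>0$ since hypothesis~1 keeps the orbit away from $0$. This contradiction yields uniform weak persistence.

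The main obstacle is the passage from weak to strong persistence. Here I would exploit the dissipativity in hypothesis~3: in $\mathbb{R}^{n}$ the eventual bound $M$ means every orbit enters the compact ball $\{\|x\|\le M\}$, so the discrete-time system generated by $H$ is point-dissipative with a compact global attractor. For $\eta=\|\cdot\|$ the extinction boundary $\{\eta=0\}$ is exactly $\{0\}$, whose only invariant subset is the equilibrium $0$, which as a single fixed point is trivially isolated and acyclic. The general weak-implies-strong persistence theorem of \cite{st} (via a Butler--McGehee argument and acyclicity of the boundary) then upgrades the conclusion to uniform strong $\eta$-persistence: there is $\epsilon_{0}>0$ with $\liminf_{t}\|x(t)\|\ge\epsilon_{0}$ for every $x_{0}\ne0$.

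Finally, for the $\hat\eta$ statement I would combine strong $\|\cdot\|$-persistence with the interior-reaching hypothesis. From $\liminf_{t}\|x(t)\|\ge\epsilon_{0}$ and $\limsup_{t}\|x(t)\|\le M$, every nonzero orbit eventually lies in the compact set $K:=\{x\ge0:\epsilon_{0}/2\le\|x\|\le M+1\}$, which avoids $0$. Choosing $c=M+1$ produces a uniform $s$ with $H^{s}(x)\gg0$ for all $x\in K$; since $x\mapsto\min_{i}(H^{s}(x))_{i}$ is continuous and strictly positive on the compact set $K$, it attains a minimum $\delta>0$. Hence for all large $t$ we have $x(t)\in K$ and therefore $\hat\eta(x(t+s))=\min_{i}\big(H^{s}(x(t))\big)_{i}\ge\delta$, giving $\liminf_{t}\hat\eta(x(t))\ge\delta>0$. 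Noting that the same hypothesis forces $X_{0}=\{0\}$, this establishes the claim for every $x_{0}\in\mathbb{R}_{+}^{n}\setminus X_{0}$.
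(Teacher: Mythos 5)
This theorem is not proved in the paper at all: it is quoted, with citation, from \cite{st}, and the paper's later results simply apply it. Your proposal therefore has to be judged on its own merits, and on those terms it is essentially correct, with one soft spot. Stage one (uniform weak persistence via $\phi(x)=v^{T}x$, the estimate $\phi(H(x))\geq r_{1}\phi(x)$ on $\{x\geq 0:\|x\|\leq\rho_{0}\}$, and the geometric-growth contradiction) is complete and correct, as is stage three, where compactness of $K$ together with a single application of the interior-reaching hypothesis at $c=M+1$ yields a $\delta>0$ that is uniform over orbits because $K$ and $s$ depend only on $\epsilon_{0}$ and $M$. You are also right to flag the standing assumption $H(0)=0$: it is not among the stated hypotheses but is needed (both for the Taylor expansion at $0$ and for $0\in X_{0}$), and is implicit in the setting of \cite{st} and in the paper's application, where $F(x)=A(x)x$.

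The soft spot is stage two. The weak-to-strong upgrade is the genuine analytic content of the cited theorem, and you discharge it by invoking the general machinery of \cite{st}. Since the statement itself is lifted from \cite{st}, that is defensible, but two corrections are in order. First, the result you need is the dissipativity-based ``uniform weak implies uniform strong'' theorem (Chapter 4 of \cite{st}), whose hypotheses (point dissipativity plus asymptotic smoothness, automatic for a continuous map in $\mathbb{R}^{n}$) you do verify; the Butler--McGehee/acyclicity formulation you name is a different theorem, and your claim that $\{0\}$ is ``trivially'' isolated as an invariant set in fact requires your own stage-one estimate (any invariant set inside $\{\|x\|\leq\rho_{0}\}$ other than $\{0\}$ would force $\phi\to\infty$ along it, a contradiction with boundedness). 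Second, in this setting the citation can be avoided entirely, keeping the proof elementary: once an orbit is trapped in $\{\|x\|\leq M\}$, the set $K':=\{x\geq 0:\rho_{0}\leq\|x\|\leq M\}$ is compact and $\phi(H(x))>0$ on it by hypothesis 1, so $\delta':=\min_{x\in K'}\phi(H(x))>0$; every excursion of the orbit into the ball $\{\|x\|<\rho_{0}\}$ then begins at $\phi$-height at least $\delta'$, and $\phi$ grows geometrically by the factor $r_{1}$ during the excursion, whence $\liminf_{t\to\infty}\phi(x(t))\geq\min\{c_{1}\rho_{0},\delta'\}$ with constants independent of the orbit. This gives uniform strong $\|\cdot\|$-persistence directly from your stage-one functional and hypothesis 3, after which your stage three goes through unchanged.
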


\section{Dispersal in Discrete-Time}
Before we introduce our model, we will first briefly describe several discrete-time dispersal models that have been proposed in the literature. This is to put our work in context, and to clarify how the model class we study here relates to others considered in the mathematical ecology literature. 

\vspace{3mm}

In \cite{yakubo} the authors investigated the $n$-dimensional system
\begin{equation} \label{yakubodisp}
\begin{aligned} &x_i(t+1) = \left(1 - \sum_{j =1}^n d_{ij}\right)f_i(x_i(t)) + \sum_{j =1}^n d_{ji}f_j(x_j(t)), \\ \ \ &x_i(0) \in \mathbb{R}_+,
\end{aligned}
\end{equation} where for each $i \neq j$, $0<d_{ij}, d_{ji}< 1$, $\sum_{j=1}^nd_{ij} \in (0,1)$ and $d_{ii}=0$. Each $d_{ij}$ is the proportion of individuals dispersing from region $i$ to $j$. They also assumed that each $f_i$ was given by a so-called $\alpha$-\textit{monotone} \textit{concave map}. Let $\alpha \in (0, \infty]$. A positive $C^2$ map, $f$, is $\alpha$-concave monotone if \[f'(x)>0, f''(x)<0 \ \forall \ x \in [0,\alpha].\] Examples of such maps can be generated by choosing appropriate parameter values for the \textit{Ricker}, \textit{Smith-Slatkin} and \textit{Beverton-Holt} maps. The authors then went to give a sufficient condition for the existence and global stability of a positive fixed point of (\ref{yakubodisp}).

\vspace{3mm}

In \cite{kirkland} the authors proposed the following coupled model 
\begin{equation} \label{kirkmodel}
\begin{aligned}
    &x(t+1) = S_p \Gamma(x(t))x(t), \\ &x(0) \in \mathbb{R}_+^n 
\end{aligned} 
\end{equation} where $p \in (0,1]^n$, $S=(s_{ij}) \in \mathbb{R}_+^{n \times n}$ is a primitive column substochastic matrix, $\Gamma:\mathbb{R}_+^n \rightarrow \mathbb{R}_+^{n \times n}$ is given by $\Gamma(x)=\mbox{diag}(g_1(x_1),...,g_n(x_n))$, and \[S_p := I- \mbox{diag}(p) + S\mbox{diag}(p).\] Each $g_i: \mathbb{R}_+ \rightarrow \mathbb{R}_+$ is a positive, continuous, decreasing map such that \[\lim_{x_i \rightarrow \infty}g_i(x_i)<1\] and $f_i(x)=g_i(x)x$ is increasing. In relation to (\ref{kirkmodel}), the authors stated the following result, the proof of which relied heavily on the strong monotonicity properties of their system class.

\begin{theorem}\cite{kirkland} \label{kirkthm}
    If $\rho(S_p \Gamma(0)) \leq 1$ then the extinction equilibrium for (\ref{kirkmodel}) is GAS. If $\rho(S_p \Gamma(0)) > 1$ then there exists a GAS positive equilibrium for (\ref{kirkmodel}).
\end{theorem}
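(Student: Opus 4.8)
The plan is to exploit the monotone and sublinear structure of (\ref{kirkmodel}). Writing $F(x):=(f_1(x_1),\dots,f_n(x_n))^T$ with $f_i(x_i)=g_i(x_i)x_i$, the update map is $H(x)=S_p F(x)$, and I would set $A_0:=S_p\Gamma(0)$ so that the threshold quantity is $\rho(A_0)=\rho(S_p\Gamma(0))$. Three structural facts drive the argument. First, each $f_i$ is increasing and $S_p\geq 0$, so $H$ is \emph{order-preserving}: $0\leq x\leq y\implies H(x)\leq H(y)$. Second, since $g_i$ is decreasing, $f_i(x_i)/x_i=g_i(x_i)$ is decreasing, so $F$ is \emph{strictly subhomogeneous}, and as $S_p$ is a nonnegative linear map $H(\lambda x)\geq\lambda H(x)$ for $\lambda\in(0,1)$, strictly on the interior. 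Third, from $g_i(x_i)\leq g_i(0)$ one gets the linear domination $H(x)\leq A_0 x$ on $\mathbb{R}_+^n$, while continuity of $g_i$ gives $H(\epsilon v)/\epsilon\to A_0 v$ as $\epsilon\to 0^+$; moreover $A_0$ is nonnegative and primitive, since $S$ is primitive, $p\gg 0$ and $g_i(0)>0$, so by Theorem \ref{pf} $\rho(A_0)>0$ is an eigenvalue with a positive eigenvector.

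I would then fix a global bound used in both cases. Because $S$ is column substochastic and $p\gg 0$, a direct computation gives $\mathbbm{1}^T S_p<\mathbbm{1}^T$, so the column sums of $S_p$ are bounded by some $c<1$. Using $\lim_{x_i\to\infty}g_i(x_i)<1$ together with monotonicity of $g_i$, for suitable $\theta\in(0,1)$ and $B>0$ one has $f_i(x_i)\leq\theta x_i+B$ for all $x_i\geq 0$, whence the total population $N(t):=\mathbbm{1}^T x(t)$ satisfies $N(t+1)\leq c\theta N(t)+cnB$ with $c\theta<1$. This yields a uniform ultimate bound and a compact, forward-invariant absorbing set (and in particular verifies hypothesis 3 of Theorem \ref{stthm}). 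In the subcritical case $\rho(A_0)\leq 1$, the domination $H(x)\leq A_0 x$ gives $x(t)\leq A_0^{\,t}x_0$. If $\rho(A_0)<1$, Lemma \ref{lem1} furnishes $v\gg 0$ with $A_0 v\ll v$, and comparing the orbit with the contracting linear iteration along $v$ forces $x(t)\to 0$, so the extinction equilibrium is GAS. The borderline $\rho(A_0)=1$ is where strict subhomogeneity is essential: the linear bound is only marginal, so I would instead start a monotone iteration from a dominating point of the absorbing set and use the strict inequality $H(\lambda x)\gg\lambda H(x)$ to exclude any nonzero limit, again concluding $x(t)\to 0$.

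In the supercritical case $\rho(A_0)>1$, existence of a positive equilibrium follows from Lemma \ref{lem1}: there is $v\gg 0$ with $A_0 v\gg v$, and since $H(\epsilon v)/\epsilon\to A_0 v$ the point $\underline{x}=\epsilon v$ is a strict subsolution, $H(\underline{x})\gg\underline{x}$, for small $\epsilon>0$, while any sufficiently large point of the absorbing set is a supersolution. Monotonicity makes the iterates $H^t(\underline{x})$ increasing and bounded, hence convergent to a fixed point $x^\star\gg 0$. Uniqueness of $x^\star$ in the interior and global convergence of every nonzero orbit to it then follow from order-preservation together with strict subhomogeneity, via a part-metric (Thompson-metric) contraction: $H$ is nonexpansive in the part metric and strictly contracts the distance between distinct interior rays, which synchronises all interior orbits onto $x^\star$, and primitivity of $S_p$ pulls boundary orbits into the interior so that the conclusion extends to all of $\mathbb{R}_+^n\setminus\{0\}$.

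The hard part is the passage from local to \emph{global} behaviour, concentrated in two places: the threshold case $\rho(A_0)=1$, where linearisation is inconclusive, and the extension from interior to boundary orbits in the supercritical case. Both are handled by the strict subhomogeneity and the part-metric contraction rather than by the linear comparison, and the crux is that these properties genuinely require the \emph{primitivity} of $S_p$ (and hence of $A_0$), not merely its nonnegativity---this is the ``strong monotonicity'' on which the argument, and the result of \cite{kirkland}, ultimately rests.
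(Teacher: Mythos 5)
This statement is quoted from \cite{kirkland} and the paper contains no proof of it at all; it only remarks that the original argument ``relied heavily on the strong monotonicity properties of their system class.'' Your proposal---order-preservation of $H(x)=S_pF(x)$, subhomogeneity inherited from the decay of the $g_i$, primitivity of $S_p\Gamma(0)$, a dissipativity bound, linear comparison with $A_0=S_p\Gamma(0)$ below threshold, and a monotone-iteration/part-metric argument for existence, uniqueness and global attraction above threshold---is precisely that strong-monotonicity-plus-sublinearity route, so it follows essentially the same approach as the source the paper cites, and its outline is sound. One caveat worth recording: the hypotheses as recapped in the paper only say the $g_i$ are \emph{decreasing}, whereas every step of yours that invokes strictness (the borderline case $\rho(A_0)=1$, uniqueness of the positive fixed point, the strict part-metric contraction) genuinely requires the $g_i$ to be \emph{strictly} decreasing; without that the statement itself fails (a flat piece of $g_i$ at the right level produces a continuum of fixed points), so you are implicitly using the strict form of the hypothesis, which is what the cited result in effect assumes.
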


A variation of the models of \cite{kirkland} and \cite{yakubo} was studied in \cite{ruiz1} and \cite{ruiz2}. In these papers, the model studied is of the general form \begin{equation} \label{rs}
\begin{aligned} x_i(t+1) &= \sum_{j =1}^n d_{ij}f_j(x_j(t)),
\end{aligned}
\end{equation} where $x_i(0) \in \mathbb{R}_+^n$. In \cite{ruiz1} the author assumed that $f_1=\cdots f_n = f$, where $f(x)=g(x)x$ for $g: \mathbb{R}_+ \rightarrow \mathbb{R}_+$. They also assumed that $\sum_j d_{ij}=1$. Under the additional assumption that $d_{ij}=d_{ji}$ for all $i,j \in \{1,...,n\}$, they proved that if $x^* \geq 0$ is GAS for $f$ then $(x^*, ..., x^*) \in \mathbb{R}_+^n$ is GAS for (\ref{rs}). In \cite{ruiz2} the author looked at (\ref{rs}) where each region depended on some intrinsic parameter $r_i \geq 0$. The author then looked numerically at various scenarios involving different dispersal mechanisms when restricted to two regions. An early instance of a model of the form in (\ref{rs}) can be found in \cite{karlin}, where the local stability properties of a planar system in the applied context of mathematical genetics were studied.

\vspace{3mm}

In \cite{franco},  \cite{grombach} and \cite{vortkamp} the authors studied similar models to \cite{kirkland} and \cite{yakubo} for when $n=2$, i.e.
\begin{equation} \label{vortkampmodel}
\begin{aligned}
    &x_i(t+1) =  \left(1-d_i\right)f_i(x_i(t)) +  d_jf_j(x_j(t)), \\ 
    &x_i(0) \in \mathbb{R}_+, i \in \{1,2\}, i \neq j.
\end{aligned}
\end{equation}
In \cite{franco} they assumed that $f_i(x) = r_i x g_i(x)$, where $g:\mathbb{R}_+ \rightarrow (0,\infty)$ was strictly decreasing and $g_i(0)=1$. They showed that, when the spectral radius is less than 1, any initial population is driven to extinction. On the other hand, they showed that (\ref{vortkampmodel}) is permanent (there is a compact set $K \subset \mbox{Int}(\mathbb{R}_+^2)$ and $t_0>0$ such that any solution $x(t, x_0)$ remains in $K$ for all $t \geq t_0$) if the spectral radius of the Jacobian of the system at the extinction equilibrium is greater than $1$.  The model was investigated numerically for various parameter scenarios when regional dynamics are given by either a Ricker or Hassell-1 map, i.e. \[f_i(x)=\dfrac{a_ix_i}{1+b_ix_i}.\] In \cite{vortkamp} they assumed that $d_1=d_2=d \in [0,0.5]$ and $f_1=f_2=f$, where \[f(x) = x \exp\left(r\left(1-\frac{x}{K}\right)\left(\frac{x}{A}-1\right)\right),\] $r>0$ and $0<A<K$. Here $A$ and $K$ are respectively the so-called \textit{Allee threshold} and \textit{carrying capacity}. They found that the appearance and disappearance of attractors of their coupled system is dependent on how strong the level of dispersal is. They also explored how transient phenomena emerged within such a simple coupled system. More recently, in \cite{grombach} the authors assumed that the dispersal rate was symmetric, i.e. $d_1=d_2=d \in [0,1]$ and that each $f_i$ was given by a Hassell-1 map, with each $a_i>1$ (region $i$ is a so-called source) and $b_i > 0$. They investigated how changing the dispersal rate in various scenarios affected the asymptotic total population size and discussed the biological interpretations of their results.

\vspace{3mm}

In \cite{ecomod} the authors proposed a similar planar model to \cite{grombach} and \cite{vortkamp}, in order to model insect dynamics between two hosts, where dispersal was asymmetric and given by an arbitrary nonlinear function taking values in $(0,1)$. This took the general form \begin{equation} \label{ecommodel}
\begin{aligned}
    &x_i(t+1) =  \left(1-d_i(x_i(t))\right)f_i(x_i(t)) +  d_j(x_j(t))f_j(x_j(t)), \\ 
    &x_i(0) \in \mathbb{R}_+
\end{aligned}
\end{equation} for $i,j \in \{1,2\}, i \neq j$. Regional growth is modelled by \[f_i(x) := R_iF_iS_i x \mbox{exp}(-\mu x),\] where $R_i \in (0,1), S_i \in (0,1), F_i>0$ and  $\mu > 0$ are respectively the \textit{sex ratio}, \textit{survival}, \textit{fecundity} and \textit{intraspecific competition} parameters. Dispersal was given by a smooth map $d_k:\mathbb{R}_+^2 \rightarrow (0,1)$, for $k \in \{1,2\}$ with $d_i \neq d_j$ for $i \neq j$. The authors gave sufficient conditions for persistence and existence of a positive fixed point, which were robust to choices of $d_{k}$. They also found that for specific parametrisations, even though one region may go extinct in isolation, if the two regions are connected by heterogeneous dispersal one can induce a so-called rescue effect, preventing the declining region from going extinct.

\section{The Metapopulation Model}

We now define the model class to be studied here and highlight how it relates to some of the models discussed above. 

\vspace{3mm}

Consider a population that inhabits $n \in \mathbb{Z}_+ := \mathbb{N} \cup \{0\}$ regions within some landscape and denote the population density in region $i\in\{1,...,n\}$, at time $t \in \mathbb{Z}_+$, by $x_i(t) \in \mathbb{R}^n_+$. We implicitly assume that all regions are accesible by all individuals. Let \(f_{i}:\mathbb{R}_+ \rightarrow \mathbb{R}_+\) be a map which respectively describes population growth in region \(i\). Let $\cal{M}$ denote the set of maps $f:\mathbb{R}_+ \rightarrow \mathbb{R}_+$ such that the following hold:
\begin{enumerate}[(A)]
    \item $f$ is $C^1$ and \textit{positive definite}, i.e. $f(0)=0$ and $f(x)>0$ \mbox{for} $x > 0$;
    \item $f$ is a \textit{Kolmogorov-type map}, i.e. $f(x) := g(x)x$ and $g: \mathbb{R}_+ \rightarrow (0, \infty)$ is $C^1$ \cite{kon};
    \item $\exists$ $m \in [0, \infty)$ : $f(x) \leq m$ $\forall$ $x \geq 0$.
\end{enumerate} 

Throughout the rest of the paper we will assume that \[\{f_1,...,f_n\} \subset \mbox{$\cal{M}$}.\]
The class $\cal{M}$ contains many of the commonly used maps for discrete time modeling of ecological systems. In particular, the following all belong to $\cal{M}$ \cite{francoh, hassell, ricker, schreiber01, schreiber03}:
\begin{eqnarray*}
   &&\bullet \ \mbox{Generalised Beverton-Holt: } f(x)=\dfrac{ax}{1+(x/b)^{c}}; \ a>0, b> 0, c \geq 1. \\
    &&\bullet \ \mbox{Hassell: } f(x)=\dfrac{ax}{(1+bx)^{c}}; \ a,b>0, c \geq 1. \\
    &&\bullet \ \mbox{Ricker: } f(x)= ax\exp(-bx); \ a,b>0. \\
    &&\bullet \ \mbox{Logistic: } f(x)= ax(1-x); a \in [0,4].\end{eqnarray*}


\begin{remark}Let $f:\mathbb{R}_+ \rightarrow \mathbb{R}_+$ be a $C^1$ map that satisfies:
\begin{enumerate}[(a)]
\item $f$ is positive definite.
\item $f$ has fixed points $\{0,K\}$, such that $K \in (0, \infty)$, $f(x)>x$ for $x \in (0,K)$ and $f(x)<x$ for $x \in (K,\infty)$.
\item $f$ has a unique critical point $L < K$ such that $f'(x) >0$ \mbox{$\forall$} $x \in (0,L)$, $f'(x)<0$ \mbox{$\forall$} $x \in (L, \infty)$, and $f'(0) >0.$
\end{enumerate}

Such maps are known as \textit{unimodal population maps} \cite{francoh}. Let $\cal{U}$ be the set of unimodal population maps. It is clear, by definition, that the following inclusion holds: \[\mbox{$\cal{U}$} \subset \mbox{$\cal{M}$}.\] The stability of systems with dynamics given by maps in $\cal{U}$ has attracted much interest in the mathematical ecology literature. For example, see  \cite{sarkovskii, cull88, schreiber01, schreiber03}.
\end{remark}

Following reproduction/recruitment, sub-populations from region $j$ move to region $i$, with the proportion given by the function \(d_{ij}:\mathbb{R}_+ \rightarrow (0,1)\), where \begin{enumerate}[(D)] \item $d_{ij}$ is $C^1$ and for each fixed $i \in \{1,...,n\}$, $\sum_{j=1}^{n} d_{ji}(x) < 1$ \mbox{$\forall$} $x \geq 0$. \end{enumerate} 
A general form for such a coupled system is
\begin{equation} \label{gendisp}
\begin{aligned} x_i(t+1) &= \sum_{j =1}^n d_{ij}(x_j(t))f_j(x_j(t)),
\end{aligned}
\end{equation} where $x_i(0) = \bar{x}_i \in \mathbb{R}_+^n$, $i \in \{1,...,n\}$. We can rewrite (\ref{gendisp}) as a so-called nonlinear matrix model\begin{gather} \label{matrixmodel} x(t+1) = F(x(t)) := A(x(t))x(t),\ \ x(0) = x_{0} \in \mathbb{R}_+^n,\end{gather} where the matrix valued function $A:\mathbb{R}_+^n \rightarrow \mathbb{R}_+^{n \times n}$ is defined as \begin{eqnarray} \label{matrix} A(x) := D(x)G(x), \end{eqnarray} where $D(x) := (d_{ij}(x_j))$ and $G(x) := \mbox{diag}(g_1(x_1), ..., g_n(x_n))$. As $g_i$ and $d_{ij}$ are continuous, $A$ is also continuous. Note that for a fixed $x \in \mathbb{R}_+^n$ we have that $D(x)$ is a column substochastic matrix.

\begin{remark} \label{remarkD}
    If $\sum_{j=1}^{n} d_{ji}(x) = 1$ \mbox{$\forall$} $x \geq 0$, then this could be interpreted as there being no cost to dispersal, which is a common assumption in models with constant dispersal \cite{bajo, ruiz1}. However, in this paper we assume (D) holds, so that \[d_{ii}(x) \neq 1-\sum_{j \neq i} d_{ji}(x),\] which allows one to incorporate dispersal costs, i.e. the sum of the proportions of individuals remaining and leaving patch $i$ is less than unity. Assumption (D), along with $\{f_1,...,f_n\} \subset \mbox{$\cal{M}$}$, further highlights the difference between our coupled model and the models of \cite{franco, vortkamp, grombach, yakubo, kirkland, ruiz1, ruiz2}. Note that the current authors' previous model in \cite{ecomod} does not allow for dispersal costs. Further note that each $d_{ij}$ being a function of $x$ means that one can incorporate density-dependent effects like competition, mortality and other ecological processes that may affect individuals when dispersing between regions. 
\end{remark}

In \cite{kirkland} and \cite{yakubo} the authors assumed dispersal was constant. In \cite{kirkland} they also had three assumptions on $g_i$ and $f_i$, as seen in Section 3. In \cite{yakubo} the authors assumed each region was modelled by an $\alpha$-concave monotone map. We note that our regional maps are different to those considered by \cite{yakubo}. We can demonstrate this with an example.

\begin{example}
    Consider the function \[f(x) = \gamma x \exp(-(x-1)^2),\] where $\gamma > e$. Clearly $f$ is continuous, $f(x)>0$ for $x>0$ and $f(0)=0$. Therefore (A) holds. $f$ is of the form $g(x)x$ with $g(x) = \gamma \exp(-(x-1)^2)$. Therefore, as $g$ is continuous, we have that (B) holds. $f$ also has a unique global maximum at $m = (1 + \sqrt{3}) / 2$ and so (C) holds. Therefore $f \in$ $\cal{M}$. One can verify that \begin{eqnarray*}
        f'(x) &=& \gamma \exp(-(x-1)^2)(1+2x-2x^2) \\ 
    f''(x) &=& 2\gamma \exp(-(x-1)^2) \left(2 -x -4x^{2} + 2x^{3}\right).
    \end{eqnarray*} From this we can see that $f'(x)>0 \ \forall \ x \in (0,m)$, but for $m_1 \approx 0.223$ we have that $f''(x)>0 \ \forall \ x \in (0,m_1)$. Therefore there does not exist an $\alpha \in (0, \infty]$ such that $f''(x)<0$ for all $x \in (0, \alpha]$ and so the definition of $\alpha$-concave monotonicity is violated. Hence we have found $f \in$ $\cal{M}$ such that $f$ is not $\alpha$-concave monotone.
\end{example}

\begin{remark} Note that maps in $\cal{U}$ are closely related to $\alpha$-concave monotone maps first introduced in \cite{yakubo}. Maps in $\cal{U}$ must possess a positive equilibrium that is greater than $L>0$. One must also have that they are increasing and concave up to this maximum, and decreasing and concave down after it. In \cite{yakubo} the authors assumed that the dynamics on each region was modelled by an $\alpha$-concave monotone map of the form $f(x):=g(x)x$, where $g$ is a strictly decreasing, positive, $C^2$ map. Clearly if $f$ is $\alpha$-concave monotone then (a) and (b) hold. For $f \in$ $\cal{U}$ assumption (c) ensures these maps are unimodal. In \cite{yakubo} they only assume $f'(x)>0$ up to $\alpha>0$. This does not require that $\alpha < K$ and so an $\alpha$-concave monotone map is not necessarily unimodal. If we took $\alpha=L$ as in the definition of $\cal{U}$ then the fact that $f$ is concave in $[0,\alpha]$ follows from (b) and (c).
\end{remark}

\section{Main Results}

We will now discuss some of the qualitative properties of (\ref{gendisp}).

\subsection{Stability}
 We first state a sufficient condition for LAS of the extinction equilibrium.

\begin{proposition} \label{max<1}
    Suppose $\{f_1,...,f_n\} \subset \mbox{$\cal{M}$}$ and $F(x)=A(x)x$, where $A(x)$ is given by (\ref{matrix}). Further assume that $\max_{i \in \{1,...,n\}}g_i(0)<1$. Then, $x^*=0$ is a LAS equilibrium of system (\ref{gendisp}) for all $d_{ij}$ satisfying (D).
\end{proposition}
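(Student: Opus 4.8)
The plan is to establish local asymptotic stability through linearisation. Since each $f_i$ and each $d_{ij}$ is $C^1$, the map $F$ is $C^1$ with $F(0)=0$, so by the standard discrete-time linearisation (Lyapunov) theorem it suffices to show that the Jacobian $F'(0)$ satisfies $\rho(F'(0)) < 1$.

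First I would compute $F'(0)$ explicitly. Writing $F_i(x) = \sum_{j=1}^n d_{ij}(x_j) g_j(x_j) x_j$, only the $j=k$ summand depends on $x_k$, so
\[ \frac{\partial F_i}{\partial x_k}(x) = \big( d_{ik}'(x_k) g_k(x_k) + d_{ik}(x_k) g_k'(x_k)\big) x_k + d_{ik}(x_k) g_k(x_k). \]
The step needing the most care is the evaluation at the origin: the first two contributions each carry an explicit factor $x_k$ and therefore vanish at $x=0$, leaving $\frac{\partial F_i}{\partial x_k}(0) = d_{ik}(0) g_k(0)$. Hence $F'(0) = D(0)G(0) = A(0)$, the matrix (\ref{matrix}) evaluated at the extinction state. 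This is the only genuinely computational point; everything else is structural.

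It then remains to bound $\rho(A(0))$. The $(i,j)$ entry of $A(0)$ is $d_{ij}(0) g_j(0) \ge 0$, so the sum of the entries in column $j$ equals $g_j(0) \sum_{i=1}^n d_{ij}(0)$. Assumption (D) states exactly that $\sum_{i=1}^n d_{ij}(0) < 1$ for each fixed $j$, so this column sum is strictly less than $g_j(0) \le \max_k g_k(0) < 1$. Consequently every column sum of the non-negative matrix $A(0)$ lies strictly below $1$, and since the spectral radius is dominated by the maximum absolute column sum (the induced $1$-norm), $\rho(A(0)) \le \|A(0)\|_1 < 1$. Equivalently, one may apply Lemma \ref{lem1} to $A(0)^T$ with $v = \mathbbm{1}$, since the relation $A(0)^T\mathbbm{1} \ll \mathbbm{1}$ records precisely that each column sum of $A(0)$ is less than unity.

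Combining the two steps gives $\rho(F'(0)) = \rho(A(0)) < 1$, and the linearisation theorem yields that $x^* = 0$ is LAS. The argument invokes only the column-substochasticity built into (D) together with $\max_i g_i(0) < 1$; it makes no use of monotonicity nor of the specific form of the $f_i$ beyond membership in $\mathcal{M}$. Hence the conclusion holds uniformly over all dispersal functions satisfying (D), as claimed.
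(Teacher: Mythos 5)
Your proof is correct and follows essentially the same route as the paper's: you compute $F'(0)=A(0)$ by exploiting the Kolmogorov form $f_j(x)=g_j(x)x$ (so that the derivative terms carrying a factor $x_k$ vanish at the origin), then bound $\rho(A(0))$ by the induced $1$-norm (maximum column sum), which assumption (D) together with $\max_i g_i(0)<1$ forces strictly below $1$, and conclude LAS by the standard linearisation theorem. Your closing remark that this can equivalently be phrased via Lemma~\ref{lem1} applied to $A(0)^T$ with $v=\mathbbm{1}$ is a nice observation but does not change the substance of the argument.
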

\begin{proof}
Assume $\max_{i \in \{1,...,n\}}g_i(0)<1$. As (A) holds we have that $f_i(0)=0$ for all $i \in \{1,...,n\}$. The Jacobian of $F$ at $0$ can be written as \begin{eqnarray} \label{jacobf0} F'(0) := \left( \begin{array}{cccc}
d_{11}(0)f_1'(0) \  & d_{12}\left( 0 \right)f_{2}'\left( 0 \right) \  & \cdots & d_{1n}\left( 0 \right)f_{n}'\left( 0 \right) \\
d_{21}\left( 0 \right)f_{1}'\left( 0 \right) \ & d_{22}(0)f_2'(0) \ & \cdots \  & d_{2n}\left( 0 \right)f_{n}'\left( 0 \right) \\
\vdots & & \ddots & \vdots \\
d_{n1}\left( 0 \right)f_{1}'\left( 0 \right) \ & d_{n2}\left( 0 \right)f_{2}'\left( 0 \right) \ & \cdots \ & d_{nn}(0)f_n'(0)
\end{array}\  \right).\end{eqnarray} As each $f_i$ is of Kolmogorov type we have that \[f_i'(x) = g_i'(x)x + g_i(x), \ x \geq0 \implies f_i'(0)=g_i(0).\] As $g_i(0)>0$ and $d_{ij}(0) \in (0,1)$, we have that $F'(0)=A(0) \gg 0$. Recall that for any $A := (a_{ij}) \in \mathbb{R}^{n \times n}$ we have that $\rho(A) \leq \|A\|_{1} := \max_{j \in \{1,...,n\}} \left\{\sum_{i =1}^n|a_{ij}| \right\}$ \cite{hj}. Therefore we have that \begin{eqnarray*}
        \rho(A(0)) \leq \|A(0)\|_{1} &=& \max_{i \in \{1,...,n\}} \left\{f_i'(0) \sum_{j =1}^nd_{ji} \right\} \\ &<& \max_{i \in \{1,...,n\}} f_i'(0) \\ &=& \max_{i \in \{1,...,n\}} g_i(0).
    \end{eqnarray*} We thus have that $\rho(A(0))<1$ and so $x^*=0$ is a LAS equilibrium of (\ref{gendisp}), which follows from standard Lyapunov stability results (see for example \cite{hin}). \hfill $\square$
\end{proof}

\begin{remark}
    Note that a sufficient condition for $x^*=0$ to be a LAS equilibrium of (\ref{IVP}) is that $|f'(0)|<1$. This then implies that such a sufficient condition for LAS of extinction for each isolated system, with $f_i \in$ $\cal{M}$, is also sufficient for LAS of the extinction equilibrium for (\ref{gendisp}), by Proposition \ref{max<1}.
\end{remark}

Our next result concerns the global stability of the extinction equilibrium.

\begin{proposition} \label{globalext}
    Suppose $\{f_1,...,f_n\} \subset \mbox{$\cal{M}$}$, (D) holds and $F(x)=A(x)x$, where $A(x)$ is given by (\ref{matrix}). Further assume that the following hold: \begin{enumerate}[(i)]
    \item $d_{ii}(x) \equiv d_i \in (0,1)$;
        \item $g_i(x)$ and $d_{ij}(x)$ are decreasing functions of $x$ for $i \neq j$;
        \item  $\rho(A(0)) < 1$.
    \end{enumerate} Then, $x^*=0$ is a GAS equilibrium of (\ref{gendisp}). 
\end{proposition}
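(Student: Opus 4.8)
The plan is to use the monotonicity hypotheses (i) and (ii) to dominate the nonlinear map $F$ pointwise by its linearisation $A(0)$ at the origin, and then to run a monotone comparison argument against the linear recursion $y(t+1)=A(0)y(t)$, whose stability is supplied by (iii). First I would record the entrywise form of $A(x)$. Since $D(x)=(d_{ij}(x_j))$ and $G(x)$ is diagonal, the $(i,j)$ entry of $A(x)=D(x)G(x)$ is $d_{ij}(x_j)g_j(x_j)$, so it depends only on the single coordinate $x_j$. For the diagonal entries, assumption (i) gives $A(x)_{ii}=d_i\,g_i(x_i)$, and since $g_i$ is decreasing by (ii), $A(x)_{ii}\le d_i\,g_i(0)=A(0)_{ii}$ for every $x_i\ge 0$. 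For the off-diagonal entries, both $d_{ij}$ and $g_j$ are positive and decreasing by (ii), so their product is decreasing in $x_j$, whence $A(x)_{ij}=d_{ij}(x_j)g_j(x_j)\le d_{ij}(0)g_j(0)=A(0)_{ij}$. These two cases together yield the key entrywise comparison $0\le A(x)\le A(0)$ for all $x\in\mathbb{R}_+^n$.

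Next I would convert this matrix inequality into a statement about trajectories. Because $x\ge 0$ and $A(x)\le A(0)$ with $A(0)\ge 0$, multiplying through gives $0\le F(x)=A(x)x\le A(0)x$ for every $x\in\mathbb{R}_+^n$; in particular the system is positive, so a trajectory starting from $x_0\ge 0$ remains in $\mathbb{R}_+^n$ for all time. Using that the nonnegative matrix $A(0)$ preserves the order $\le$, a short induction then shows $0\le x(t,x_0)\le A(0)^t x_0$ for all $t\ge 0$: assuming the bound at time $t$, we get $x(t+1,x_0)=F(x(t,x_0))\le A(0)\,x(t,x_0)\le A(0)\cdot A(0)^t x_0=A(0)^{t+1}x_0$.

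Finally I would invoke (iii). Since $\rho(A(0))<1$ we have $A(0)^t\to 0$, and fixing a norm that is monotone on the cone (for instance $\|\cdot\|_1$) there exist constants $C\ge 1$ and $\gamma\in(0,1)$ with $\|A(0)^t\|\le C\gamma^t$. The comparison bound then gives $\|x(t,x_0)\|\le\|A(0)^t x_0\|\le C\gamma^t\|x_0\|$, which simultaneously delivers Lyapunov stability (take $\delta=\epsilon/C$) and global attractivity ($x(t,x_0)\to 0$ for every $x_0\in\mathbb{R}_+^n$), so $x^\star=0$ is GAS. The conceptual crux, and the step requiring the most care, is the passage from the nonlinear map to the linear comparison: it is precisely the sign conditions in (i)--(ii) that force $F(x)\le A(0)x$, and the nonnegativity of $A(0)$ that lets the inequality propagate under iteration. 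As an alternative to the norm estimate in the last step, Lemma \ref{lem1} supplies a vector $v\gg 0$ with $A(0)v\ll v$, from which the weighted norm $\max_i x_i/v_i$ can be shown to decrease strictly along trajectories, yielding the same conclusion.
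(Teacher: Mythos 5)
Your argument is correct, and it takes a genuinely different route from the paper's. The paper proves this result with a linear copositive Lyapunov function: by Lemma \ref{lem1}, $\rho(A(0))<1$ yields a vector $v \gg 0$ with $v^T A(0) \ll v^T$; hypotheses (i)--(ii) are then used to compare $v^T A(x)$ with $v^T A(0)$, so that $V(x) = v^T x$ strictly decreases along nonzero orbits, and GAS follows from standard Lyapunov theory (the paper cites Hinrichsen and Pritchard). You instead use (i)--(ii) to obtain the entrywise domination $0 \le A(x) \le A(0)$, sandwich the trajectory as $0 \le x(t,x_0) \le A(0)^t x_0$ by induction (using that $A(0) \ge 0$ preserves the componentwise order), and finish with the geometric decay of $A(0)^t$ guaranteed by $\rho(A(0))<1$ together with monotonicity of $\|\cdot\|_1$ on the cone. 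Your route is more elementary --- it needs no Perron--Frobenius input and no appeal to a Lyapunov stability theorem --- and it delivers a stronger quantitative conclusion, namely global \emph{exponential} stability with explicit estimate $\|x(t,x_0)\|_1 \le C\gamma^t\|x_0\|_1$. It also sidesteps a small imprecision in the paper's proof: the claimed strict inequality $v^T A(x) \ll v^T A(0)$ for all $x>0$ fails when some coordinate $x_j = 0$ (column $j$ of $A(x)$ then equals column $j$ of $A(0)$), although the paper's conclusion survives because the strictness in $\Delta V < 0$ really comes from $v^T A(0) \ll v^T$ rather than from strict monotonicity of the entries. What the paper's route buys in exchange is a Lyapunov function that is constructed once, independently of the trajectory, which fits the toolkit (Lemma \ref{lem1}) reused elsewhere in the paper and adapts naturally to perturbed versions of (\ref{gendisp}). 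Note finally that your closing alternative --- taking $v \gg 0$ with $A(0)v \ll v$ and showing the weighted norm $\max_i x_i/v_i$ decreases --- is essentially the paper's argument in its sup-norm guise.
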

\begin{proof}
Assume $\rho(A(0)) < 1$. In the proof of Proposition \ref{max<1} we saw that $F'(0)=A(0) \gg 0$ and so it follows from Lemma \ref{lem1} that $\mbox{there exists}$ $v^T \gg 0$ such that $v^TA(0) \ll v^T$. Define the function $V:\mathbb{R}^n \rightarrow \mathbb{R}$ as \[V(x) := v^T x.\] As $g_i(x)$ and $d_{ij}(x)$ are decreasing functions of $x$ for $i \neq j,$ and $d_{ii}(x) \equiv d_i \in (0,1)$ we have that \begin{eqnarray*} v^TA(x) \ll v^T A(0) \ \forall \ x>0.\end{eqnarray*} As (\ref{gendisp}) is a positive system, it follows that for $t \in \mathbb{N}$
    \begin{eqnarray*}
        \Delta V := V(x(t+1)) - V(x(t)) &=& V(F(x(t))) - V(x(t)) 
        \\ &=& v^T \left(A(x(t)) - I\right)x(t)  \\ 
        &<& v^T \left(A(0) - I\right)x(t) \\ &<& 0.
    \end{eqnarray*} Hence $V$ is decreasing along non-zero orbits of $F$. Clearly $V$ is positive-definite, i.e. $V(0)=0$ and $V(x)>0$ for all $x >0$. $V$ is radially unbounded, i.e. \[\lim_{\|x\| \rightarrow \infty} V(x) \rightarrow \infty.\] Thus $V$ defines a radially unbounded (copositive) Lyapunov function for $F$ with respect to $x^*=0$ (see for example \cite{hin}). This then implies that \[\lim_{t \rightarrow \infty} F^t(x(t,x_0))=0,\] i.e. $x^*=0$ is GAS for $F$. \hfill $\square$
\end{proof}

\begin{example}
    Examples of $g_i$ and $d_{ij}$ satisfying condition (i) in Theorem \ref{globalext} include \begin{eqnarray*}
        &&g_i(x) = \frac{a_i}{1+b_ix},\mbox{ and } \\
        &&d_{ij}(x) = \exp(-p_{ij}\left(x+q_{ij}\right)),
    \end{eqnarray*} for $a_i, b_i, p_{ij}, q_{ij} >0.$
\end{example}

In \cite{yakubo} they give a sufficient condition for the extinction equilibrium of (\ref{yakubodisp}) to be unstable. We can show that an analogous result holds for (\ref{gendisp}) when regional dynamics are given by maps in $\cal{M}$. First define \[\mbox{$\cal{R}$}_{A(0)} := \min_{i=1,...,n}\left\{g_i(0), \min_{i \in \{1,...,n\}}g_i(0)\sum_{j \neq i}d_{ji}(0) \right\}.\]

\begin{proposition} \label{propRd}
Suppose $\{f_1,...,f_n\} \subset \mbox{$\cal{M}$}$ and $F(x)=A(x)x$, where $A(x)$ is given by (\ref{matrix}). If $\mbox{$\cal{R}$}_{A(0)}>1$ then $x^*=0$ is an unstable equilibrium of (\ref{gendisp}) for all $d_{ij}$ satisfying (D).
\end{proposition}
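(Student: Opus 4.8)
The plan is to reduce the instability of the extinction equilibrium to a spectral condition on the linearisation $F'(0) = A(0)$, and then to show that $\mathcal{R}_{A(0)} > 1$ forces $\rho(A(0)) > 1$. By the same standard (Lyapunov) linearisation results invoked for Proposition \ref{max<1}, if the Jacobian at a fixed point of a differentiable map has spectral radius strictly greater than one, the fixed point is unstable. It therefore suffices to establish that $\rho(A(0)) > 1$.

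As computed in the proof of Proposition \ref{max<1}, $f_i'(0) = g_i(0)$, so $A(0) = F'(0)$ has entries $A(0)_{ij} = d_{ij}(0)g_j(0)$ and is a nonnegative matrix to which Lemma \ref{lem1} applies. I would bound $\rho(A(0))$ from below by testing the transpose $A(0)^T$ against the all-ones vector $\mathbbm{1}$. The $i$-th entry of $A(0)^T\mathbbm{1}$ is precisely the $i$-th column sum of $A(0)$, namely $g_i(0)\sum_{k=1}^n d_{ki}(0) = g_i(0)d_{ii}(0) + g_i(0)\sum_{k\neq i} d_{ki}(0)$.

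Since $g_i(0)d_{ii}(0) > 0$, this column sum strictly exceeds $g_i(0)\sum_{k\neq i} d_{ki}(0)$, which is in turn bounded below by $\mathcal{R}_{A(0)} > 1$ by the definition of $\mathcal{R}_{A(0)}$. Hence $A(0)^T\mathbbm{1} \gg \mathbbm{1}$, and Lemma \ref{lem1} applied to $A(0)^T$ yields $\rho(A(0)^T) > 1$. As $A(0)$ and $A(0)^T$ share the same spectrum, $\rho(A(0)) = \rho(A(0)^T) > 1$, and therefore $x^* = 0$ is unstable for every choice of $d_{ij}$ satisfying (D).

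I expect the only real subtlety to be bookkeeping rather than any deep obstacle. One must respect the convention that $d_{ij}$ is the proportion moving from region $j$ to region $i$, so that the quantity controlled by $\mathcal{R}_{A(0)}$ is a \emph{column} sum of $A(0)$ (equivalently a row sum of $A(0)^T$, which is why the test is against the transpose rather than $A(0)$ itself). Recognising that $A(0)^T$ is the correct object, and that discarding the strictly positive diagonal contribution $g_i(0)d_{ii}(0)$ only strengthens the inequality, is the crux of the argument.
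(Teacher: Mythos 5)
Your proof is correct and follows essentially the same route as the paper's: both reduce instability to showing $\rho(A(0))>1$ and then bound the spectral radius from below by the off-diagonal column sums $g_i(0)\sum_{j\neq i}d_{ji}(0) \geq \mathcal{R}_{A(0)}>1$. The only cosmetic difference is that you derive this column-sum bound by applying Lemma \ref{lem1} to $A(0)^T$ with the test vector $\mathbbm{1}$, whereas the paper cites the minimum-row-sum inequality (applied, implicitly, to the transpose); your version also bypasses the paper's two-case analysis of $\mathcal{R}_{A(0)}$, which is superfluous anyway since $\mathcal{R}_{A(0)}>1$ already forces the relevant column-sum term to exceed one.
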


\begin{proof} Assume $\mbox{$\cal{R}$}_{A(0)} > 1$. In the proof of Proposition \ref{max<1} we saw that $F'(0)=A(0) \gg 0$ and so $\rho(A(0))>1$ is sufficient for instability. Recall that for any $A :=(a_{ij}) \in \mathbb{R}_+^{n \times n}$ one has that $\rho(A) \geq \min_{i \in \{1,...,n\}} \left\{ \sum_{j=1}^na_{ij} \right\}$ \cite{hj}. Therefore it follows that \begin{eqnarray} \label{minrow}
    \rho(A(0)) &\geq& \min_{i \in \{1,...,n\}} \left\{g_i(0)  \sum_{j \neq i} d_{ji}(0)\right\}.
 \end{eqnarray} There are two possible cases to consider. In the first case we have that \[\mbox{$\cal{R}$}_{A(0)} = g_k(0) = \min_{i \in \{1,...,n\}}g_i(0) > 1\] for some $k \in \{1,...,n\}$. In the second case we have that \[\mbox{$\cal{R}$}_{A(0)} = \min_{i \in \{1,...,n\}}g_i(0) \sum_{j\neq i} d_{ji}(0)>1,\] for some $i \in \{1,...,n\}$, this implies that \[1 < \mbox{$\cal{R}$}_{A(0)} \leq \min_{k \in \{1,...,n\}}g_k(0) < g_i(0) \ \forall \ i \in \{1,...,n\}.\] In any case $\min_{i \in \{1,...,n\}}g_i(0)>1$ and so (\ref{minrow}) implies that $\rho(A(0))>1$. \hfill $\square$
\end{proof}

\begin{remark}
   The sufficient conidition assumed in Proposition \ref{propRd} which ensures $\rho(A(0))>1$ is stronger than just assuming $\rho(A(0))>1$. In other words, it is only sufficient for instability. We can find a matrix $A(0) \in \mathbb{R}_+^{n \times n}$ such that $\rho(A(0))>1$ but $\cal{R}$$_{A(0)} \leq 1$. For example, let $n=2$, $d_{12}(0)=d_{21}(0)=0.1$, $d_{11}(0)=0.5$, $d_{22}(0)=0.1$, $g_1(0) = 0.01$ and $g_2(0) = 36$. Then \begin{eqnarray*}
        &&\mbox{$\cal{R}$$_{A(0)}$} = g_1(0)d_{21}(0) \approx 0.001 < 1 \\ 
        &&\rho(A(0)) \approx 3.6 > 1.
    \end{eqnarray*} 
\end{remark}

We now show that $F$ is strongly positive.

\begin{lemma} \label{pos} Suppose $\{f_1,...,f_n\} \subset \mbox{$\cal{M}$}$, (D) holds and $F(x)=A(x)x$, where $A(x)$ is given by (\ref{matrix}). Then, $F(x) \gg 0$ for all $x \in \mathbb{R}_+^n \backslash \{0\}$ and for all $d_{ij}$ satisfying (D).
\end{lemma}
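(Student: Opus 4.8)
The plan is to exploit the single structural fact that makes the conclusion almost immediate: every entry of the dispersal matrix $D(x)$ is strictly positive. Indeed, by assumption (D) each $d_{ij}$ takes values in $(0,1)$, so $D(x) = (d_{ij}(x_j)) \gg 0$ for every $x \in \mathbb{R}_+^n$. I would therefore rewrite the map in the factored form $F(x) = A(x)x = D(x)\big(G(x)x\big)$ and reduce the claim to the elementary linear-algebra fact that a strictly positive matrix sends any nonzero vector in the nonnegative cone to a strictly positive vector.

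The key intermediate step is to identify the vector $y := G(x)x$ and verify it is a nonzero element of $\mathbb{R}_+^n$ whenever $x \neq 0$. Since $G(x) = \mbox{diag}(g_1(x_1),\dots,g_n(x_n))$, I would compute coordinatewise that $y_j = g_j(x_j)x_j = f_j(x_j)$, so that $y = (f_1(x_1),\dots,f_n(x_n))^T$. Property (A) (positive definiteness of each $f_j \in \mbox{$\cal{M}$}$) gives $f_j(x_j) \geq 0$ for all $j$, hence $y \geq 0$. Moreover, if $x \in \mathbb{R}_+^n \backslash \{0\}$ then there is at least one index $k$ with $x_k > 0$, and then $f_k(x_k) = g_k(x_k)x_k > 0$ because $g_k(x_k) > 0$. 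Thus $y \geq 0$ and $y_k > 0$, i.e. $y$ is a nonzero nonnegative vector.

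Finally I would combine these observations: for each $i \in \{1,\dots,n\}$,
\[
F_i(x) = \sum_{j=1}^{n} d_{ij}(x_j)\,f_j(x_j) \;\geq\; d_{ik}(x_k)\,f_k(x_k) \;>\; 0,
\]
since all summands are nonnegative and the $k$-th summand is a product of the strictly positive number $d_{ik}(x_k) \in (0,1)$ with $f_k(x_k) > 0$. As this holds for every coordinate $i$, we conclude $F(x) \gg 0$, and the argument is manifestly independent of the particular choice of the $d_{ij}$ beyond assumption (D).

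I do not expect a genuine obstacle here: the result is a direct consequence of $D(x)$ being a strictly positive matrix. The only points requiring care are purely bookkeeping — invoking (A) to secure both $y \geq 0$ and the strict positivity $f_k(x_k) > 0$ at the surviving coordinate, and noting that the strict inequality in the displayed bound survives because the remaining terms of the sum are nonnegative rather than possibly negative. This lemma is precisely the ingredient needed to later verify the primitivity/irreducibility-type hypotheses in Theorem \ref{stthm}.
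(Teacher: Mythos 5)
Your proof is correct and takes essentially the same approach as the paper's: the paper's one-line argument notes that (B) and (D) force $A(x) = D(x)G(x) \gg 0$, so that $F(x) = A(x)x \gg 0$ for any $x \in \mathbb{R}_+^n \backslash \{0\}$, which is exactly the coordinatewise computation you spell out through $y = G(x)x$. Your write-up merely makes explicit the elementary fact both proofs rest on, namely that a strictly positive matrix maps $\mathbb{R}_+^n \backslash \{0\}$ into the interior of the cone.
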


\begin{proof}
As (B) and (D) hold, clearly $A(x) \gg 0 \implies F(x) \gg 0$ for all $x >0$. \hfill $\square$
\end{proof}


Our next result establishes the boundedness of the map $F$, which in turn implies that the associated system is point-dissipative in the terminology of \cite{st}. 

\begin{proposition} \label{pointdis} Suppose $\{f_1,...,f_n\} \subset \mbox{$\cal{M}$}$, (D) holds and $F(x)=A(x)x$, where $A(x)$ is given by (\ref{matrix}). Then, $\exists$ \(M > 0\) such that \mbox{$\forall$} \(x \in \mathbb{R}_+^{n}\) \[\|F(x)\|_1 \leq M.\] 
\end{proposition}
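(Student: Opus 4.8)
The plan is to exploit the two structural features that directly bound the size of $F$: the uniform boundedness of each regional map (assumption (C)) and the column substochasticity of $D(x)$ guaranteed by (D). The key preliminary observation is that $G(x)x = (f_1(x_1),\dots,f_n(x_n))^T$, so that $F(x) = A(x)x = D(x)\,(f_1(x_1),\dots,f_n(x_n))^T$. In other words, the growth maps act coordinatewise first, and dispersal merely redistributes the outputs $f_j(x_j)$, each of which is already bounded. This reduces the problem to controlling how $D(x)$ aggregates these bounded quantities.

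First I would invoke assumption (C) to fix, for each $i \in \{1,\dots,n\}$, a constant $m_i \in [0,\infty)$ with $f_i(x) \le m_i$ for all $x \ge 0$. Next I would write out the $\ell_1$-norm of $F(x)$ explicitly as $\|F(x)\|_1 = \sum_{i=1}^n \big| \sum_{j=1}^n d_{ij}(x_j) f_j(x_j) \big|$. Since $\{f_1,\dots,f_n\} \subset \mathcal{M}$ forces $f_j(x_j) \ge 0$ and the $d_{ij}$ take values in $(0,1)$, every entry of $F(x)$ is non-negative, so the absolute values may be dropped and the order of summation interchanged, yielding $\|F(x)\|_1 = \sum_{j=1}^n f_j(x_j) \sum_{i=1}^n d_{ij}(x_j)$.

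The final step is to recognise the inner sum $\sum_{i=1}^n d_{ij}(x_j)$ as precisely the $j$-th column sum of $D(x)$, which by condition (D) is strictly less than $1$ for every $x \ge 0$. Hence each summand is bounded above by $f_j(x_j) \le m_j$, and so $\|F(x)\|_1 \le \sum_{j=1}^n m_j =: M$, uniformly in $x$ and independently of the particular choice of the $d_{ij}$ satisfying (D). I expect the only place requiring care to be the index bookkeeping: matching the summation convention of (D), where the summation runs over the first subscript with the second held fixed, to the column sums arising here. Once that identification is made, boundedness is immediate and requires no compactness or continuity argument, since all the content resides in assumption (C).
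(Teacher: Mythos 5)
Your proof is correct and follows essentially the same route as the paper's: use non-negativity of $F(x)$ to write the $\ell_1$-norm as a plain sum, invoke the column substochasticity from (D), and bound by $M = \sum_{j=1}^n m_j$ via assumption (C). If anything, your version is slightly more careful than the paper's, which writes $\mathbbm{1}^T F(x) = \sum_{i=1}^n f_i(x_i)$ where strictly an inequality $\mathbbm{1}^T F(x) \leq \sum_{j=1}^n f_j(x_j)$ is what follows from (D) --- precisely the interchange-of-summation step you spell out explicitly.
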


\begin{proof}

For $x \in \mathbb{R}_+^n$, the $l_1$ norm of $x$ is given by \[\|x\|_1 = \sum_{i=1}^n x_i = \mathbbm{1}^T x,\] where $\mathbbm{1} := (1,...,1)^T$. It follows from Lemma \ref{pos} that $F(x) \gg 0$ $\forall$ $x \geq0$. As (D) holds we therefore have that \begin{eqnarray*} \|F(x)\|_1 = \mathbbm{1}^T F(x) &=& \sum_{i=1}^n f_i(x_i) \leq \sum_{i=1}^n m_i, \ \forall \ t>0. \end{eqnarray*} Thus define the upper bound $0<M := \sum_i^n m_i$. \hfill $\square$
\end{proof}

\begin{remark} By Lemma \ref{pos} and Proposition \ref{pointdis} the cone $\mathbb{R}_+^n$ is forward invariant under $F$ and the sequence $\{F^t(x)\}_{t \geq 0}$ has no unbounded orbit.  It is also immediate that for any $x_0 \in \mathbb{R}^n_+$, $\|x(t,x_0)\|_1 \leq M$ for all $t \geq 1$. \end{remark}

We now state our first main result, which gives a sufficient condition for the existence of a positive fixed point of $F$.

\begin{theorem}
\label{fix} Suppose $\{f_1,...,f_n\} \subset \mbox{$\cal{M}$}$, (D) holds and $F(x)=A(x)x$, where $A(x)$ is given by (\ref{matrix}). If $\rho(A(0)) > 1$, then $\exists$ $x^* \in$ Int$(\mathbb{R}_+^n)$ such that $F(x^*)=x^*$. 
\end{theorem}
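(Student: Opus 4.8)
The plan is to prove existence by a fixed point index (topological degree) argument on the cone $K:=\mathbb{R}_+^n$, rather than by exhibiting a forward-invariant region bounded away from the origin. The latter, naive approach is not available here: maps in $\mathcal{M}$ may overcompensate (they are unimodal, not monotone), so a large population can be sent close to $0$, and consequently no annular neighbourhood of the origin need be invariant. First I would record the structural facts already established. The map $F$ is continuous with $F(0)=0$; by Proposition \ref{pointdis} its image is bounded, so $F$ is a compact self-map of $K$, and since $K=\mathbb{R}_+^n$ is a retract of $\mathbb{R}^n$ the fixed point index of $F$ on relatively open subsets of $K$ is well defined. Moreover, by Lemma \ref{pos} every nonzero fixed point of $F$ automatically lies in $\mbox{Int}(\mathbb{R}_+^n)$. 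It therefore suffices to produce a single \emph{nonzero} fixed point.

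Second, I would analyse $F$ near the origin through the Perron structure of $A(0)$. As in the proof of Proposition \ref{max<1}, $F'(0)=A(0)\gg 0$, so $A(0)$ is irreducible and Theorem \ref{pf} yields a left Perron eigenvector $w\gg 0$ with $w^{T}A(0)=\rho(A(0))\,w^{T}$, where $\rho(A(0))>1$ by hypothesis. Setting $\lambda:=\tfrac{1}{2}\bigl(1+\rho(A(0))\bigr)>1$ and using continuity of $x\mapsto A(x)$ at $0$, I would pick $r>0$ so small that $w^{T}A(x)\geq \lambda\,w^{T}$ (componentwise) for all $x\in K$ with $\|x\|_{1}\leq r$. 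For such $x$ this gives the key expansion estimate
\begin{equation*}
w^{T}F(x)=w^{T}A(x)x \;\geq\; \lambda\,w^{T}x \;>\; w^{T}x \qquad (x\neq 0).
\end{equation*}

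Third, I would compute two indices and apply excision. On the outer boundary $\{x\in K:\|x\|_{1}=R\}$ with $R>M$ (the bound of Proposition \ref{pointdis}) we have $\|F(x)\|_{1}\leq M<\|x\|_{1}$, so $F(x)\neq\mu x$ for every $\mu\geq 1$; the standard compression criterion then gives index $1$ on $B_R\cap K$. On the inner boundary $\{x\in K:\|x\|_{1}=r\}$ the expansion estimate shows $x-F(x)\neq\tau w$ for all $\tau\geq 0$: applying $w^{T}$ gives $w^{T}(x-F(x))<0$ by the estimate, whereas $\tau\,w^{T}w\geq 0$, a contradiction. Hence the Krasnoselskii expansion criterion gives index $0$ on $B_r\cap K$. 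By additivity of the index on the annulus $(B_R\setminus\overline{B_r})\cap K$ the index there equals $1-0=1\neq 0$, so $F$ has a fixed point $x^{*}$ with $r\leq\|x^{*}\|_{1}\leq R$; in particular $x^{*}\neq 0$, and Lemma \ref{pos} forces $x^{*}\in\mbox{Int}(\mathbb{R}_+^n)$. Equivalently, one can package the second and third steps as Krasnoselskii's cone compression--expansion theorem in the weighted norm $\|x\|_{w}:=w^{T}x$, for which expansion $\|F(x)\|_{w}\geq\lambda\|x\|_{w}$ holds on a small $w$-sphere and compression $\|F(x)\|_{w}\leq(\max_{i}w_{i})M<\|x\|_{w}$ holds on a sufficiently large one.

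The main obstacle is the small-ball (index $0$) computation, since this is where $\rho(A(0))>1$ is genuinely used: it needs the Perron left eigenvector together with a careful continuity argument to upgrade the eigenvalue inequality for $A(0)$ to the pointwise expansion $w^{T}F(x)>w^{T}x$ on a \emph{full} relative neighbourhood of $0$ in $K$. A secondary point to verify is that the index machinery applies cleanly here --- that $K$ is a retract of $\mathbb{R}^n$ and $F$ restricts to a compact self-map --- and that the two normalisations (compression $=1$, expansion $=0$) are invoked consistently, so that excision produces a nontrivial fixed point rather than merely recovering the equilibrium at the origin.
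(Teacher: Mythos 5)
Your proof is correct, but it takes a genuinely different route from the paper's. You compute fixed point indices on the cone: index $1$ on a large ball (via the Leray--Schauder-type boundary condition $F(x)\neq\mu x$, $\mu\geq 1$, which follows from the bound of Proposition \ref{pointdis}), index $0$ on a small ball (via the Krasnoselskii--Guo criterion $x-F(x)\neq\tau w$, using the left Perron eigenvector $w$ of $A(0)$ and continuity of $A$), and conclude by additivity that a nonzero fixed point exists in the annulus; Lemma \ref{pos} then pushes it into the interior. All three index steps are standard and correctly verified, so the argument stands. The paper instead constructs an explicit compact convex set $\Omega_1=\{x\in\mathbb{R}_+^n : v^Tx\geq\kappa,\ \|x\|_1\leq M\}$ that $F$ maps into itself and applies Brouwer's theorem directly. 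This means your opening claim --- that an invariant region bounded away from the origin ``is not available'' because maps in $\cal{M}$ overcompensate --- is wrong as stated: the paper's proof is precisely such a construction. What is true is the narrower statement that no norm-annulus $\{\delta\leq\|x\|_1\leq M\}$ need be forward invariant; the paper circumvents this by bounding the region from below with the weighted functional $v^Tx$ (with $v^TA(0)\gg v^T$ from Lemma \ref{lem1}, not necessarily the exact eigenvector) and, crucially, by taking $\kappa$ below the minimum of $v^TF$ over the compact annulus $\Omega_0$, so that even overcompensated images of large states land back in $\Omega_1$; invariance away from the origin is bought by compactness and strong positivity (Lemma \ref{pos}), not monotonicity. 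The trade-off between the two proofs: the paper's is elementary (Brouwer plus Perron--Frobenius) and localises the equilibrium in the explicit region $\Omega_1$, whereas yours invokes the heavier machinery of the fixed point index but adapts more readily to situations where no convex invariant region can be exhibited, and it cleanly isolates where $\rho(A(0))>1$ is used (the index-$0$ computation at the origin).
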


\begin{proof}
It follows from Proposition \ref{pointdis} that $\mbox{there exists}$ a constant $M>0$ such that $\|F(x)\|_1 \leq M$ for all $x \in \mathbb{R}_+^n$. If $\rho(A(0))>1$, it follows from Lemma \ref{lem1} that $\exists$ $v \gg 0$ such that \[v^TA(0) \gg v^T.\]  As $A:\mathbb{R}_+^n \rightarrow \mathbb{R}_+^{n \times n}$ is a continuous matrix-valued function $\mbox{there exists}$ a constant $0<\delta < M$ such that \[v^TA(x) \geq v^T, \ \|x\|_1 \leq \delta.\] For any such $x \geq 0$ ($x \neq 0$) we then can see that \[v^TF(x) = v^TA(x)x \geq v^Tx.\] 

 If we define \begin{eqnarray*}
    \Omega_0 &:=& \{ x \in \mathbb{R}_+^n : \delta \leq \|x\|_1 \leq M \}
\end{eqnarray*} we then have that $F(x) \gg 0$ for $x\in \Omega_0$ and hence \[x \in \Omega_0 \implies  v^TF(x)>0.\] As $A$ is continuous, $F$ is also continuous. It then follows from the Extreme Value Theorem that $v^T F(x)$ attains its minimum and maximum when $\delta \leq \|x\|_1 \leq M$. Therefore $\mbox{there exists}$ $\kappa_1>0$ such that \[\min \left\{v^TF(x), \ x \in \Omega_0\right\} = \kappa_1.\] Now choose some $\bar{x} >0 $ with $\|\bar{x}\|_1 \leq M$. Then, $\exists$ $\kappa_2>0$ such that $v^T\bar{x} = \kappa_2$. Let $\kappa = \min\{\kappa_1, \kappa_2\}$ and define the region \[\Omega_1 := \{ x \in \mathbb{R}_+^n : v^Tx \geq \kappa, \ \|x\|_1 \leq M \}.\] By construction $\Omega_1$ is non-empty. Clearly $\Omega_1$ is closed and bounded, so therefore compact. Let $x, y \in \Omega_1$ and let $\alpha \in [0,1]$ be arbitrary. Then, we have that \begin{eqnarray*} v^T((1-\alpha) x + \alpha y) \geq (1-\alpha) \kappa_1 + \alpha \kappa_1 = \kappa_1, \end{eqnarray*} and by the triangle inequality \begin{eqnarray*}\|(1-\alpha) x + \alpha y\| &\leq& (1-\alpha)\|x\| + \alpha\|y\| \\ &\leq& (1-\alpha) M + \alpha M \\ &=& M.\end{eqnarray*} Therefore \[x,y \in \Omega_1 \implies (1-\alpha) x + \alpha y \in \Omega_1 \ \forall \ \alpha \in [0,1],\] i.e. $\Omega_1$ is convex. Let $z \in \Omega_1$. We have already shown that \[\delta \leq \|z\|_1 \leq M \implies v^TF(z) \geq \kappa_1 \implies F(z) \in \Omega_1.\] Since $\delta \leq M$ we must also have that \[\|z\|_1 \leq \delta \implies v^TF(z) \geq v^Tz \geq \kappa_1 \implies F(z) \in \Omega_1.\] In any case \[x \in \Omega_1 \implies F(x) \in \Omega_1.\] Hence we have found a non-empty, compact, convex set in $\mathbb{R}_+^n \backslash\{0\}$, given by $\Omega_1$, such that $F$ maps $\Omega_1$ into itself. Therefore, as $F$ is continuous on $\Omega_1$, it follows from Brouwers Fixed Point Theorem that \[\exists \ x^* \in \Omega_1 : F(x^*)=x^*.\] Lemma \ref{pos} ensures that $x^* \not \in \partial \mathbb{R}_+^n$, i.e. $x^* \gg 0$. \hfill $\square$


\end{proof}

\begin{remark}
It should be noted that Theorem 7.5 of \cite{st} could be used to give an alternative proof of the last result.  However, our proof above directly uses the specific properties of the system (\ref{gendisp}) rather than relying on more general conditions.  Moreover, it clarifies that the positive equilibrium is contained in the region $\Omega_1$.  Similarly to Theorem 7.5 of \cite{st}, we have not established uniqueness or stability of the positive equilibrium.  We shall describe several numerical investigations later to give some insight into these and related questions.  
\end{remark}





%


\subsection{Persistence}

An alternative perspective to stability is that of uniform persistence. Note that as $F'(0) = A(0)$ is irreducible (as shown in Proposition \ref{max<1}), it follows from Corollary 3.18 of \cite{st} that if $\rho(A(0))>1$, then (\ref{matrixmodel}) is uniformly weakly $\|\cdot\|$-persistent for $\|\cdot\|$ any norm on $\mathbb{R}^n$. We will now show that (\ref{gendisp}) is also uniformly strongly persistent with respect to two different persistence functions.

\begin{theorem} \label{persthem} Suppose $\{f_1,...,f_n\} \subset \mbox{$\cal{M}$}$ and $F(x)=A(x)x$, where $A(x)$ is given by (\ref{matrix}). Let $\|\cdot\|$ be any norm on $\mathbb{R}^n$. Define $\eta_1(x) := \min_{i \in \{1,...,n\}} x_i$ and $\eta_2(x) := \|x\|$. If $\rho(A(0)) > 1$, then $F$ is uniformly strongly $\eta_i$-persistent, $i \in \{1,2\}$, for all $d_{ij}$ satisfying (D).\end{theorem}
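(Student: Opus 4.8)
My strategy is to apply the persistence machinery of Theorem \ref{stthm}, since the excerpt has already assembled essentially all the required ingredients. For the strong $\eta_1$-persistence (with $\eta_1(x)=\min_i x_i$), I would check each of the three numbered hypotheses in turn, plus the additional hypothesis needed for the $\liminf$ conclusion. First, hypothesis~1, that $H(\mathbb{R}_+^n\setminus\{0\})\subset\mathbb{R}_+^n\setminus\{0\}$, is immediate from Lemma~\ref{pos}, which gives the stronger statement $F(x)\gg0$ for $x\neq0$. Second, hypothesis~3, the uniform ultimate boundedness, follows directly from Proposition~\ref{pointdis} and the remark after it: $\|x(t,x_0)\|_1\leq M$ for all $t\geq1$, so one may take $M$ as the bound and $T=1$ for every initial condition.

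The core of hypothesis~2 is the eigenvalue condition: I must produce $r_0>1$ and $v\gg0$ with $F'(0)^T v\geq r_0 v$. Here I would invoke the Perron--Frobenius theorem (Theorem~\ref{pf}). Since $F'(0)=A(0)\gg0$ (established in the proof of Proposition~\ref{max<1}), the matrix $A(0)$ is irreducible, so by Theorem~\ref{pf} applied to $A(0)^T$ there is a left Perron vector $v\gg0$ with $A(0)^T v=\rho(A(0))v$. Because $\rho(A(0))>1$ by assumption, setting $r_0=\rho(A(0))$ gives $F'(0)^T v = r_0 v\geq r_0 v$, as required. This step is clean precisely because strong positivity of $A(0)$ was already secured earlier.

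For the final, additional hypothesis of Theorem~\ref{stthm} I must show that for each $c>0$ there is $s>0$ with $F^s(x)\gg0$ for all $x$ with $0<\|x\|\leq c$. Here a single iterate suffices: by Lemma~\ref{pos}, $F(x)\gg0$ for every $x\in\mathbb{R}_+^n\setminus\{0\}$, so taking $s=1$ works uniformly in $c$. This makes the set $X_0$ of Theorem~\ref{stthm} equal to $\{0\}$, and the theorem yields some $\epsilon>0$ with $\liminf_{t\to\infty}\eta_1(x(t,x_0))\geq\epsilon$ for all $x_0\neq0$, which is exactly uniform strong $\eta_1$-persistence. I would then deduce uniform strong $\eta_2$-persistence (with $\eta_2=\|\cdot\|$) as a corollary: since all norms on $\mathbb{R}^n$ are equivalent and $\|x\|\geq C\,\eta_1(x)$ on $\mathbb{R}_+^n$ up to a constant (indeed $\eta_1$ bounds a lower envelope of any norm via $\|x\|\geq \min_i x_i$ in, e.g., the $\ell^\infty$ sense), a uniform lower bound on $\min_i x_i$ forces a uniform lower bound on $\|x\|$.

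**The main obstacle** I anticipate is not any single hard estimate but the bookkeeping of the $\eta_2$ case: one must be careful that the $\eta_2$-persistence does not follow from the same verbatim application, since Theorem~\ref{stthm} states its first (weaker, $\limsup$) conclusion for $\eta(x)=\|x\|$ but reserves the strong $\liminf$ conclusion for $\hat\eta=\min_i x_i$. The cleanest route is therefore to prove strong persistence for $\eta_1$ directly via Theorem~\ref{stthm} and then transfer it to $\eta_2$ by the elementary inequality that any norm is bounded below by a constant multiple of $\min_i x_i$ on the nonnegative cone, so that a positive uniform floor for the minimal component propagates to a positive uniform floor for the norm. I expect the remaining verifications to be routine given the earlier lemmas.
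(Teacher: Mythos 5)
Your proposal is correct and follows essentially the same route as the paper: both proofs verify the hypotheses of Theorem \ref{stthm}, using Lemma \ref{pos} for hypothesis 1 and for the extra condition with $s=1$ (so that $X_0=\{0\}$), and Proposition \ref{pointdis} for the boundedness hypothesis. The only minor differences are that the paper obtains the vector in hypothesis 2 from Lemma \ref{lem1} (getting $A(0)^T v \gg v$ and then choosing a suitable $r_0>1$) rather than the exact Perron eigenvector with $r_0=\rho(A(0))$, and it reads the $\eta_2$-conclusion directly off Theorem \ref{stthm} rather than transferring it from $\eta_1$ via the inequality $\|x\| \geq C\min_i x_i$ on $\mathbb{R}_+^n$; your more cautious transfer is equally valid.
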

\begin{proof}
Assume $\rho(A(0))>1$. It follows from Lemma \ref{pos} that $F(x) \gg 0$ for all $x > 0$ and so \[F\left(\mathbb{R}_+^n \backslash \{0\}\right) \subset \mbox{Int}\left(\mathbb{R}_+^n\right).\] Clearly $\rho(A(0))>1$ implies that $\rho(A(0)^T)>1$.  In the proof of Proposition \ref{max<1} we saw that $F'(0)=A(0) \gg 0$. It then follows from Lemma \ref{lem1} that $\mbox{there exists}$ $v \gg 0$ such that \[F'(0)^Tv = A(0)^Tv \gg v \implies F'(0)^Tv \geq r_0v\] for some suitably chosen $r_0>1$. It follows from Proposition \ref{pointdis} that $\mbox{there exists}$ $M>0$ such that $\|F(x)\|_1 \leq M$ for all $x>0$. As noted in \cite{st} the set \[X_{0}^{(k)} := \{x_{0} \in \mathbb{R}_{+}^{n} : \eta_k \left(x(t,x_{0}) \right) = 0, \ \forall \ t \geq 0\}\] is equal to $\{0\}$ precisely when $F(0)=0$ and for all $c>0$ there exists $s>0$ such that $F^s \gg 0$ when $0 <\|x\|\leq c$, $k \in \{1,2\}$ ($\|\cdot\|$ any norm on $\mathbb{R}^n$). As (A) holds we have that \[f_i(0)=0 \ \forall \ i \in \{1,...,n\} \implies F(0)=0.\] Therefore we have that $X_0^{(k)} = \{0\}, \ k\in \{1,2\}$ (with $s=1$). We have thus verified the assumptions of Theorem \ref{stthm} (with $H=F$). We can thus conclude that $\mbox{there exists}$ $\epsilon_1, \epsilon_2 > 0$ such that \begin{eqnarray*} && \min_{i \in \{1,...,n\}}x_i(0) > 0 \implies \liminf_{t\rightarrow \infty} \min_{i \in \{1,...,n\}} x_i(t) \geq \epsilon_1, \\  && \|x(0)\|>0 \implies \liminf_{t\rightarrow \infty} \|x(t)\| > \epsilon_2, \end{eqnarray*} i.e. (\ref{gendisp}) is uniformly strongly $\eta_i$-persistent, $i \in \{1,2\}$. \hfill $\square$
\end{proof}

\section{Numerical Results}

We now use numerical methods to investigate several questions suggested by the results of the previous section. Let us consider the simplest case of (\ref{gendisp}) when $n=2$. This means that (\ref{gendisp}) reduces to \begin{equation} \label{2by2model}
\begin{aligned} 
    &x_1(t+1) = d_{11}(x_1(t))f_1(x_1(t)) + d_{12}(x_2(t))f_{2}(x_2(t)) \\
    &x_2(t+1) = d_{22}(x_2(t))f_2(x_2(t)) + d_{21}(x_1(t))f_{1}(x_1(t)), \\
    &x(0) \in \mathbb{R}_+^2.
\end{aligned}
\end{equation}

If we consider (\ref{2by2model}) with each $f_i$ given by a Generalised Beverton-Holt, Hassell, Ricker or Logistic map. Then, if $a_i \in (0,1)$ for $i=1,2$, it follows from Proposition \ref{max<1} that extinction for (\ref{2by2model}) is at least LAS for any choice of $D=\left(d_{ij}\right)$. 

\vspace{3mm}

Throughout the rest of this section we will show how our model can account for various source-sink type dynamics. All computations in this Section were conducted using R (version 4.3.1) \cite{Rcore}. 

\subsection{Sources and Sinks}

Given a $C^1$ Kolmogorov type map, $f(x)=g(x)x$, it is known that \cite{schreiber03} \begin{enumerate}[(\mbox{S}1)]
\item $g(x)<1 \ \forall \ x \geq 0 \implies x(t) \rightarrow 0$ as $t \rightarrow \infty$ for all $x(0) > 0$, and 
\item $g(0)>1 \implies x(t) \not \rightarrow 0$ as $t  \rightarrow \infty$ for some $x(0)>0$.
\end{enumerate} For $f_i \in$ $\cal{M}$, we call region $i$ a \textit{sink} if it is a low quality habitat where an isolated population would go extinct, i.e. S1 holds for $g_i$. On the other hand, region $i$ is a \textit{source} if it can sustain an inhabiting population, i.e. S2 holds for $g_i$. 

\vspace{3mm}

Unless stated otherwise, we assume that $f_1$ is given by a Ricker map, i.e. \[f_1(x)= a_1x\exp(-b_1x)\] and $f_2$ is given by a Hassell-1 map (a Hassell map as in Remark 1 with $c=1$), i.e. \[f_2(x)=\dfrac{a_2x}{1+b_2x},\] where $a_i, b_i>0$ for $i=1,2$. For Ricker and Hassell-1 maps one has that \[g_i(0)=a_i<1 \implies g_i(x_i) < 1 \ \forall \ x \geq 0\] and so the extinction equilibrium is GAS for $f_i$.

\vspace{3mm}
    
If $a_i>1$, the extinction equilibrium is unstable for $f_i$. This also implies that there exists a unique positive equilibrium for $f_i$. For $f_1$ this is given by \[x_R^* := \frac{\ln(a_1)}{b_1}\] and for $f_2$ this is given by \[x_H^* := \frac{a_2-1}{b_2}.\] For the Ricker map, if $1<a_1<e$ we have that $x_R^*$ is LAS. If $a_1 > e$ we have that $x_R^*$ is unstable. For the Hassell-1 map, if $a_2 > 1$ we have that $x_H^*$ is GAS \cite{BHevol}.

\vspace{3mm}

Unless stated otherwise we further assume that region $1$ is a source (S1) and region $2$ is a sink (S2), i.e. $a_1 >1$ and $a_2<1$. Thus we are interested in examining some of the dynamical behaviour of (\ref{2by2model}) in various source-sink dispersal scenarios.

\subsection{Density-Dependent Dispersal}
Unless stated otherwise, throughout the rest of this section, let the dispersal functions in (\ref{2by2model}) be given by \begin{eqnarray}
\label{disp_example} &&d_{ij}(x_j) = \frac{r_{ij}}{(1+\exp\left(-k_{ij}\left(x_j-s_{ij}\right)\right)},\end{eqnarray} where $r_{ij} \in (0,1)$ and $k_{ij}, s_{ij} \geq 0$ for $i,j \in \{1,2\}$. These dispersal functions may describe, for example, regional populations that exhibit a mixture of both \textit{negative} and \textit{positive} density-dependent (DD) dispersal \cite{rodrigues}. Positive (negative) DD dispersal is where, for large densities, the proportion of individuals dispersing from (remaining on) a region is high, which can be controlled through the parameters $r_{ij}$, $k_{ij}$ and $s_{ij}$. For example, positive DD dispersal may be due to factors such as \textit{competition}/\textit{crowding} or \textit{dominance hierarchies} \cite{matthysen}. On the other hand, negative density dependence may be due to factors such as \textit{aggressive interactions}, for example. 

\vspace{3mm}

For instance, one can model positive DD dispersal on both regions. For example, let both $r_{ii}$ and $k_{ii}$ be sufficiently small, and $r_{ji}$ and $k_{ji}$ be sufficiently large, for $i \neq j$. In other words, $d_{ii}(x_i)$, which models the proportion of individuals remaining on region $i$, will be relatively small for all values of $x_i$ and is an increasing function of $x_i$, with the rate of increase being quite gradual. At the same time, $d_{ji}$ rapdily increases toward $r_{ji}$, which is relatively close to $1$, as $x_j$ increases.

\begin{figure}[!h]
\centering
\includegraphics[trim = 200 80 200 80, clip, width = 11cm]{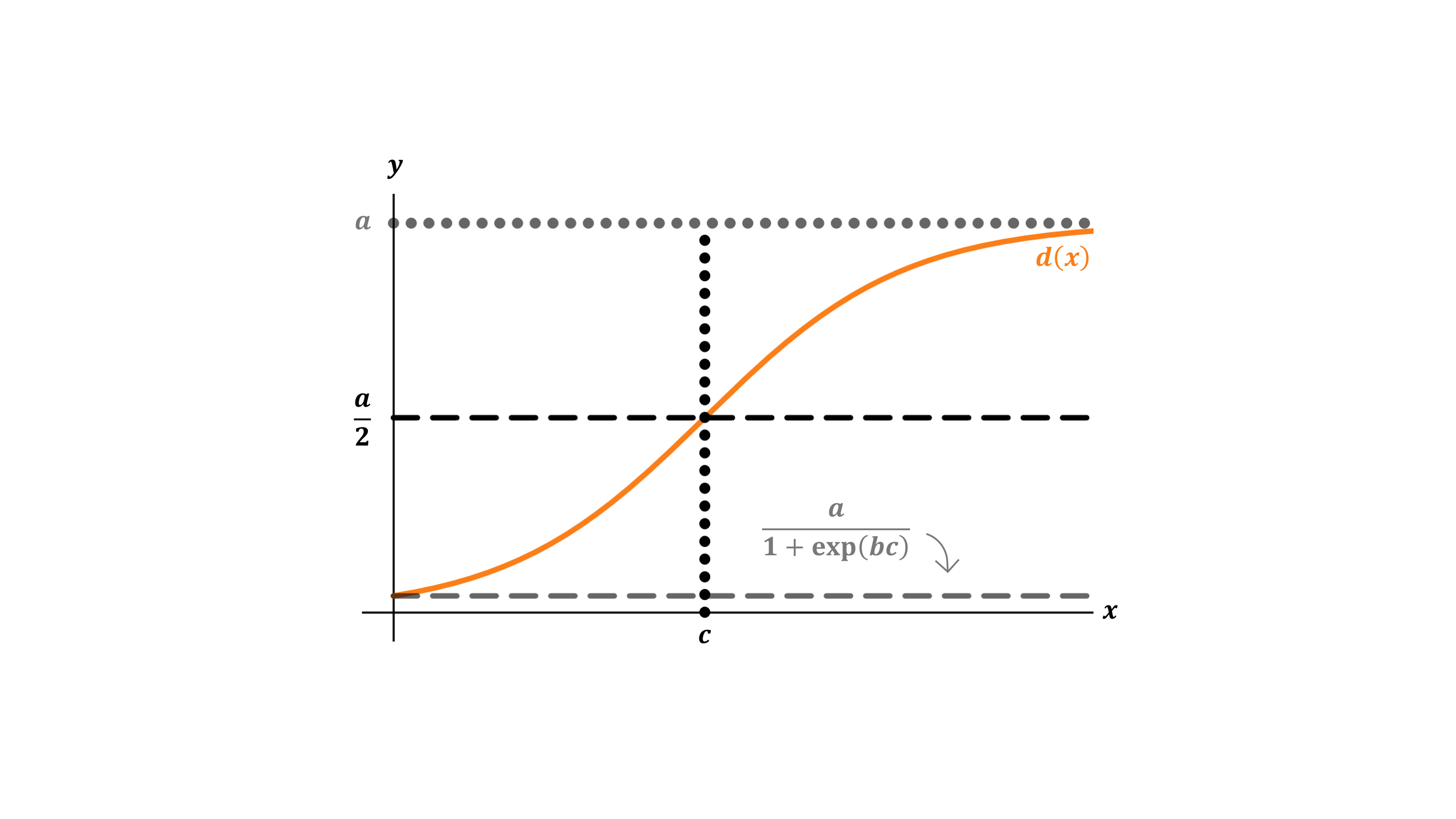}
\caption{ Illustration of $y = d(x) := a(1+\exp\left(-b\left(x-c\right)\right)^{-1}$ (orange solid curve) for $a \in (0,1)$ and $b,c \geq 0$. The dotted grey line is the curve $y=a$. The dashed grey line is the curve $y=d(0)=a(1+\exp(bc))^{-1}$. The dashed black line is the curve $y=d(c)=a/2$. The dotted black line is the curve $x=c$.}
\label{disp_image}
\end{figure}

 \begin{remark} The dispersal function (\ref{disp_example}) is also a specific form of the \textit{generalised logistic function} or so-called \textit{Richards curve} \cite{richards}. An illustration of a function of the form of (\ref{disp_example}) is given in Fig. \ref{disp_image}. We can observe that the following hold: \begin{eqnarray*}
    && d_{ii}(x)+d_{ji}(x) < 1 \iff r_{ii} + r_{ji} < 1, i \neq j; \\
    &&d_{ij}(0) = r_{ij}(1+\exp(k_{ij}s_{ij}))^{-1}>0; \\
    &&\lim_{x \rightarrow \infty} d_{ij}(x) = r_{ij}<1; \mbox{ and } \\
    &&d_{ij}\left(s_{ij}\right) = \frac{r_{ij}}{2}.
 \end{eqnarray*}\end{remark}

\begin{figure}[!h]
\centering
\includegraphics[trim = 5 30 5 30, clip, width = 12cm]{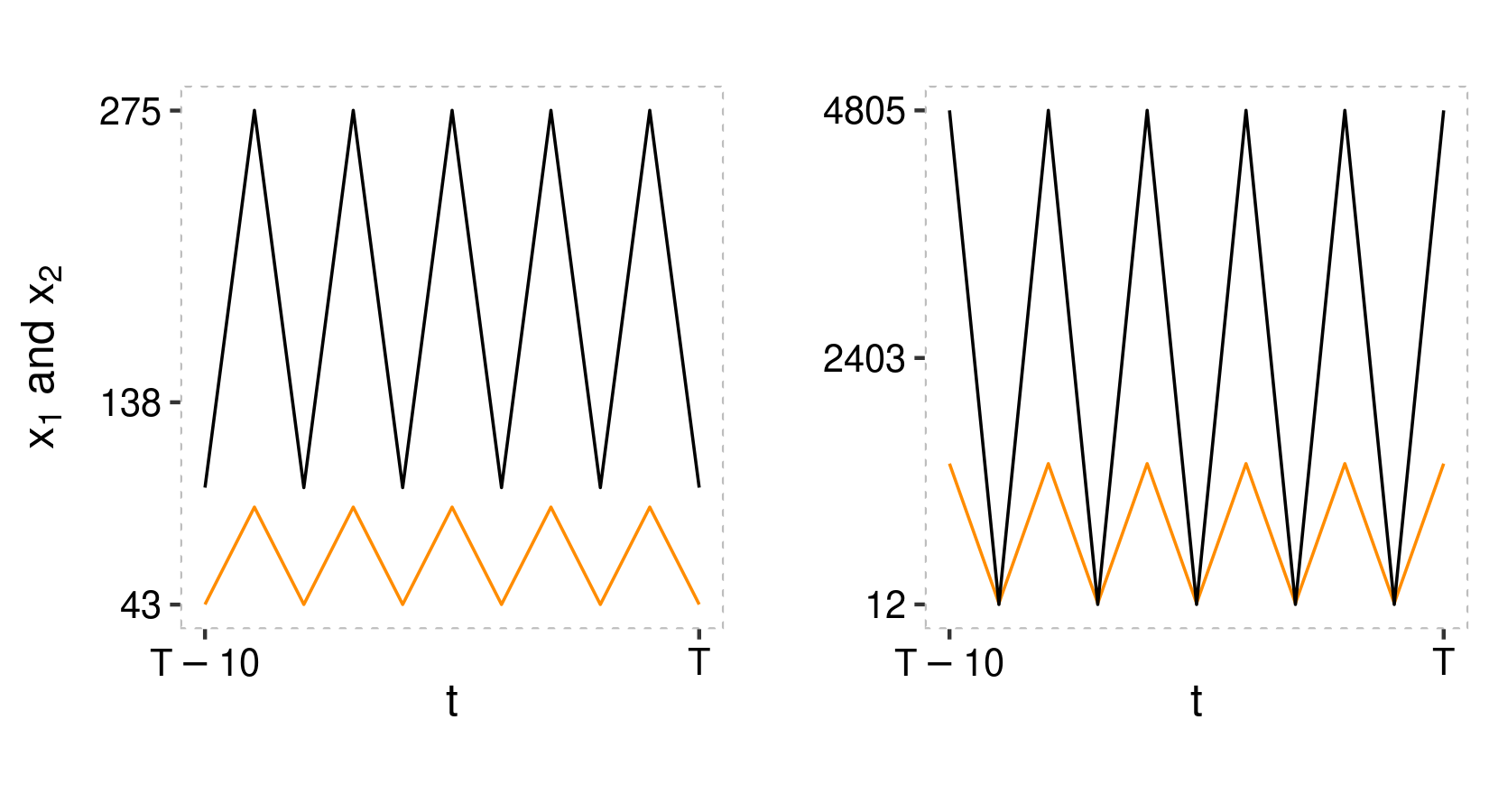}
\caption{Simulated periodic dynamics of (\ref{2by2model}) for (left) $a_1=50$, i.e. (A1) and (right) $a_1=750$, i.e. (A2). We plot the last $10$ out of $T=500,000$ time steps. Here, orange is region $1$ and black is region $2$. In both simulations we let $a_2 = 0.4, b_1=0.04, b_2=0.01, r_{11}= 0.2, r_{22}  = 0.3, r_{12}  = 0.6, r_{21}  = 0.7, k_{11}=k_{22}=k_{12}=k_{21}=0.5, s_{11}=10, s_{22}=6, s_{12}=3$ and $s_{21}=12$. Initial conditions in both were $(x_1(0), x_2(0))=(92, 103)$.}
\label{xplots}
\end{figure}

\subsection{Stability Dichotomy}

An obvious question to ask, in relation to (\ref{gendisp}), is: does an analagous result hold to Theorem \ref{kirkthm}? That is, for $\{f_1, ..., f_n\} \subset$ $\cal{M}$,
\begin{enumerate}[(Q1)]
    \item does $\rho(A(0))<1$ $\implies$ $x^*=0$ is a GAS equilibrium for system  (\ref{gendisp}); and
    \item does $\rho(A(0))>1$ $\implies$ $x^*\gg 0$ is a GAS equilibrium for system  (\ref{gendisp})?
\end{enumerate} The answers to (Q1) and (Q2) are in fact both no, as we can demonstrate using an example.

\begin{example} Consider (\ref{2by2model}) where regions $1$ and $2$ are respectively given by Ricker and Hassell-1 maps. Let all parameters be as in Fig. \ref{xplots}. We can show that (Q1) and (Q2) do not hold in general for appropriate choices of $a_1$:  

\begin{enumerate}[(\mbox{A}1)]
    \item Let $a_1=50$. Then we get that $\rho(A(0)) \approx 0.1051 < 1$. If we simulate this parameterised system we can observe in Fig. \ref{xplots} (left) the existence of a periodic solution of period $2$. Hence $x^*=0$ is not GAS, i.e. (Q1) does not hold.

    \item Let $a_1=750$. Then we get that $\rho(A(0)) \approx 1.058 > 1$. If we simulate this parameterised system we can observe in Fig. \ref{xplots} (right) the existence of a periodic solution of period $2$. Hence $x^* \gg 0$, which exists by Theorem \ref{fix}, is not GAS, i.e. (Q2) does not hold.
\end{enumerate}

In (A1) and (A2) above we checked that \[F^2\left(x_P^{(i)}\right)=F\left(F\left(x_P^{(i)}\right)\right)=x_P^{(i)}\] where $x_P^{(i)}$ is the period-$2$ limit cycle in scenario ($A_i$).
\end{example}

\begin{figure}[!h]
\centering
\includegraphics[trim = 5 15 5 15, clip, width = 12cm]{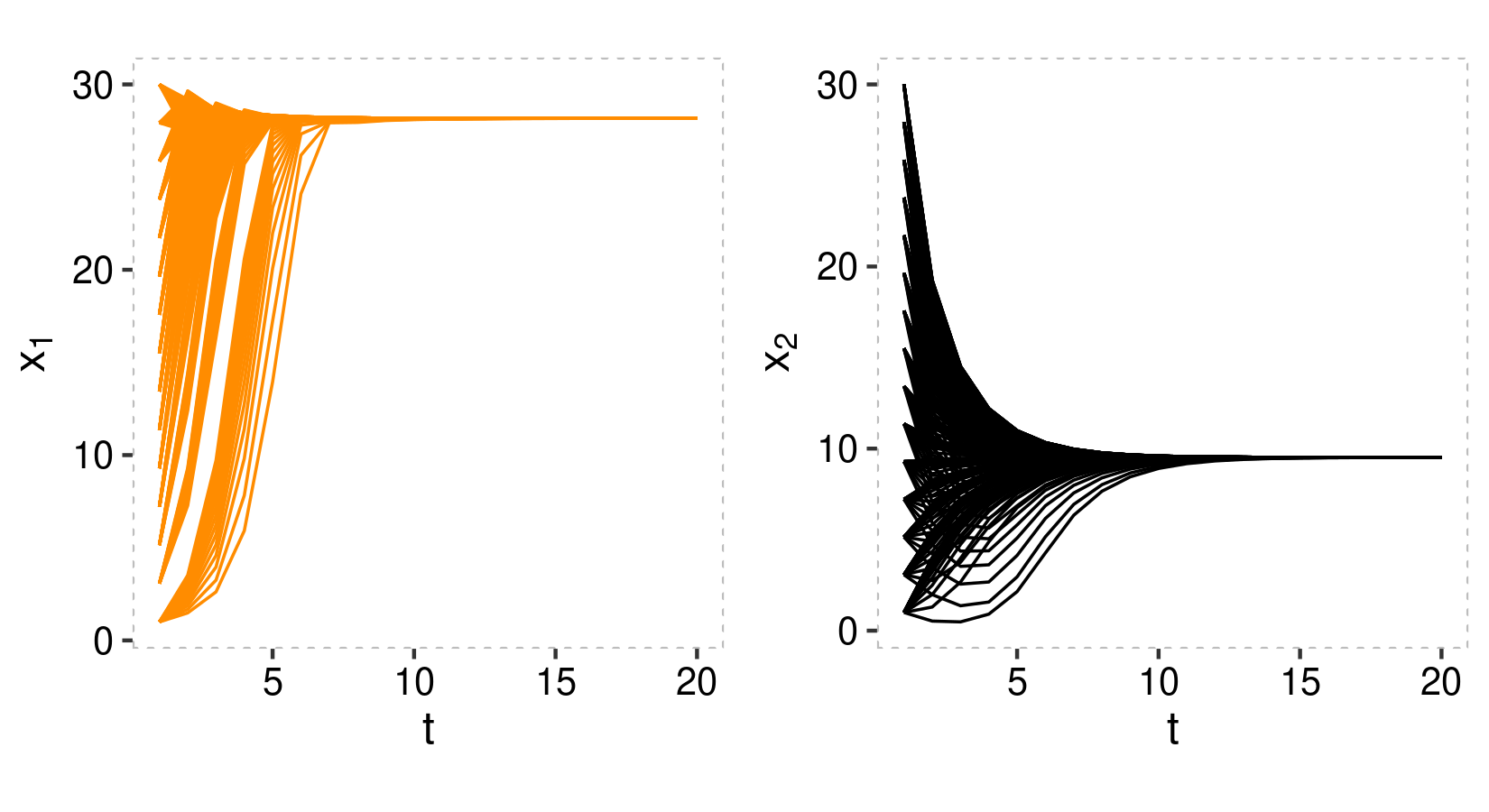}
\caption{ Simulations of (\ref{2by2model}) for various positive initial conditions $x(0) \in [1,30]^2$, where orange (left) and black (right) trajectories respectively correspond to regions $1$ and $2$. We let $a_1=4$, $a_2=0.9$, $b_1=0.04$, $b_2=0.01$, $r_{12}=r_{21}=0.1$, $r_{11} = r_{22} = 0.75$, $k_{12}=k_{21}=k_{11}=k_{22}=1$ and $s_{12}=s_{21}=s_{11}=s_{22}=1$. We increased the number of iterations for each plot to $100,000$ to ensure convergence to the fixed point approximately given by $(x_1^*, x_2^*) \approx (28.17, 9.52)$.}
\label{xplot_initial}
\end{figure}

\subsection{Regional vs Coupled Dynamics}

Consider (\ref{2by2model}) where regions $1$ and $2$ are respectively given by Ricker and Hassell-1 maps. One may ask, is coupling a source in $\cal{M}$, with a LAS positive equilibrium, to a sink in $\cal{M}$, sufficient for $\rho(A(0))>1$? The answer is in fact no. We show this in the following example. \begin{example} Let $a_1 = 1.1$ and $a_2=0.6$. Recall that for $f_1$, if $1<a_1<e$ then $x_R$ is LAS. Further let all other parameters be as in Fig. \ref{xplot_initial}. We then have that $\rho(A(0)) \approx 0.226 <1$. Here $r_{ii}>r_{ji}$ (for $i \in \{1,2\}$ and $i \neq j$) and so at high regional densities we observe a higher proportion of individuals remaining on patch $i$ than migrating, which could, for example, signify \textit{natal philopatry}, the tendency for individuals to stay in the region they are born \cite{matthysen}.\end{example}

Theorem \ref{fix} shows that $\rho(A(0))>1$ is sufficient for the existence of a positive fixed point for (\ref{gendisp}). In the next example, we show that this is not necessary. 

\begin{example} Let $a_1=4$ and $a_2=0.9$. Further let all other parameters be as in Example 4. We can then observe that $\rho(A(0)) \approx 0.81 < 1$ but $(x_1^*, x_2^*) \approx (28.17, 9.52)$ is a positive equilibrium of this system (see Fig. \ref{xplot_initial}). We simulated this system for $400$ different positive initial conditions in Fig. \ref{xplot_initial}. The results of this simulation suggest this equilibrium is unique and locally stable. We observed convergence to the extinction equilibrium when initial conditions were sufficiently close to $\partial \mathbb{R}_+^2$. This behaviour is distinct to the dynamical behaviour in \cite{kirkland}, as seen via Theorem \ref{kirkthm}. As mentioned above, the spectral radius dichotomy for stability does not, in general, hold for the model class we consider. An interesting line of future research would be to further understand under what conditions, on both $f_i \in$ $\cal{M}$ and $d_{ij}$ satisfying (D), result in this dichotomous condition holding. \end{example}

\begin{figure}[!h]
\centering
\includegraphics[trim = 5 0 5 0, clip, width = 12cm]{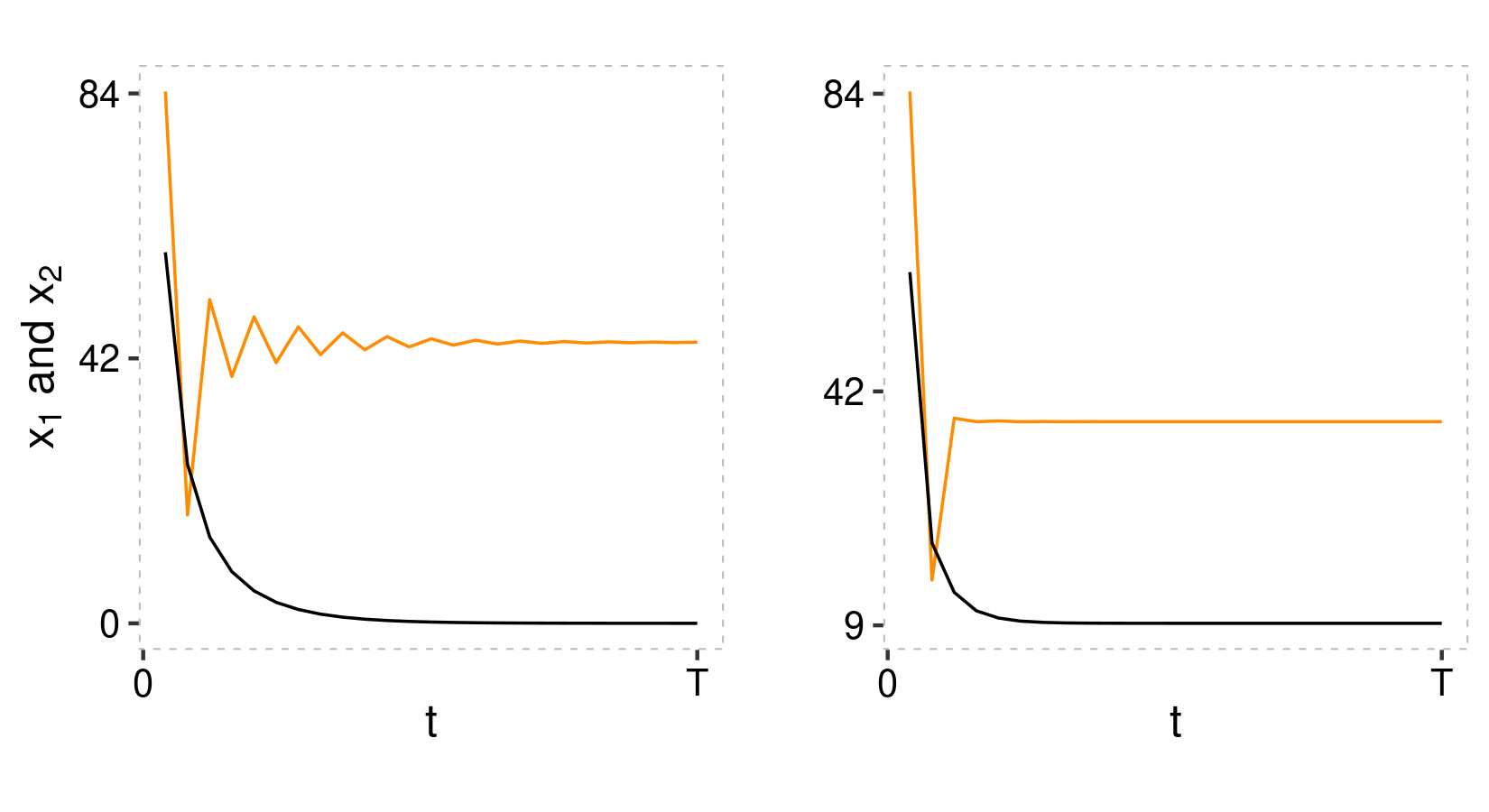}
\caption{A plot of $x_1$ (orange) and $x_2$ (black) when patches are isolated (left) and coupled (right) for $T=20$ time steps. We let $a_1= 5.94$ and $a_2=0.68$. All other parameters were as in Fig. \ref{xplot_initial}. Initial conditions, $(x_1(0), x_2(0))=(84, 59)$.}
\label{iso_coup}
\end{figure}

\subsection{The Rescue Effect and Total Population Size}
By studying a low-dimensional model such as (\ref{2by2model}), one can also show how the rescue effect can arise from coupling sinks to sources, i.e. preventing the declining sink region from going extinct via dispersal \cite{brown}. We will demonstrate this phenomenon using a simple example. \begin{example} Let $a_1=5.94$ and $a_2=0.68$. Further let all other parameters be as in Fig. 3. We can then observe that $\rho(A(0)) \approx 1.2 > 1$. This implies that (\ref{2by2model}) has a coexistence equilibrium by Proposition \ref{fix}. By Theorem \ref{persthem}, $F$ is also uniformly strongly persistent with respect to the total population size, $\|x\|_1$, the maximum sized region, $\|x\|_{\infty}$, and minimum sized region, $\min_{i \in \{1,2\}}x_i$. It thus follows from Theorem \ref{persthem} that, even though on one patch extinction may be inevitable, by allowing dispersal one can not only rescue a sink, but also ensure that each regional population persists. In Fig. \ref{iso_coup} we can observe the dynamics of our isolated system and (\ref{2by2model}) for this particular paramterisation. In this case we can see that trajectories for (\ref{2by2model}) tend to a positive fixed point. \end{example}

In \cite{franco} the authors looked at how dispersal affects the total population before and after coupling. They demonstrated that for the specific planar systems with constant dispersal rates, dispersal can have varying effects on the overall population. We will now present a simple example where one can observe analogous effects, when dispersal is nonlinear and asymmetric, and when regional maps come from different model classes. 

\begin{example} To demonstrate these effects, in the spirit of parsimony, we will fix all parameters except the \textit{dispersal saturation} parameters, $r_{12}$ and $r_{21}$. In particular we set $r_{12}=r_{21}=r \in (0,0.5)$. Let $a_1=65$ and $a_2=0.4$. We let $d_{ii}$ to be constant, i.e. the proportion remaining in region $i$ stays fixed over time. In particular let $d_{11}(x)=d_{22}(x) \equiv d = 0.499$. 
Further let the remaining parameters be as in Example 4. We then have that \begin{eqnarray*} A(0) = \left( \begin{array}{cc} da_1  & \dfrac{r a_2}{1+\exp(k_{12}s_{12})} \\
\dfrac{r a_1}{1+\exp(k_{21}s_{21})} & da_2\end{array}\  \right),\end{eqnarray*} which, for a known $r$, has characteristic polynomial approximately given by \[ \chi(\lambda) \approx \left(32.4350-\lambda \right)\left(0.1996- \lambda \right)-1.8806r^2.\]

\begin{figure}[!h]
\centering
\includegraphics[trim = 5 0 5 0, clip, width = 11cm]{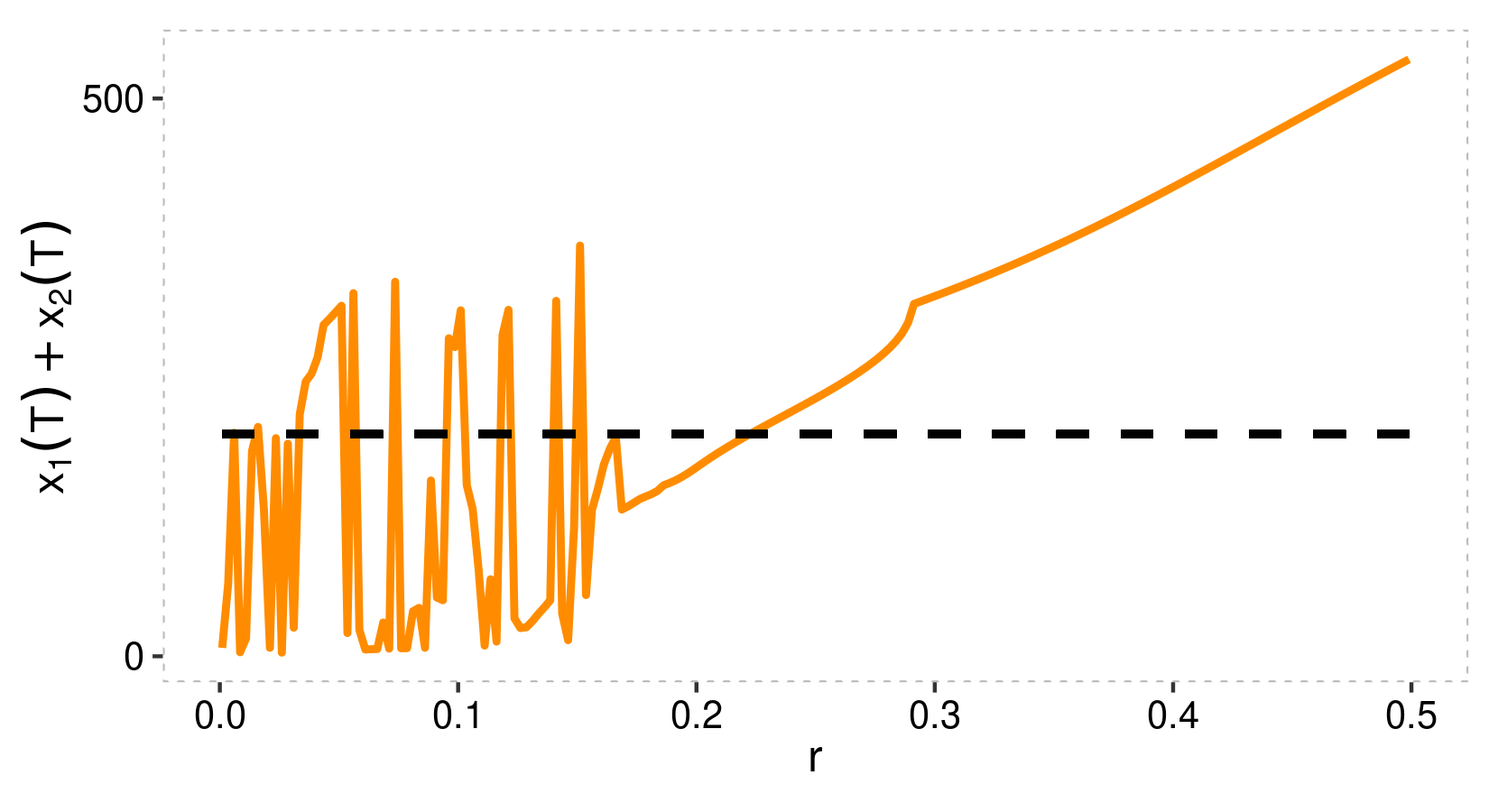}
\caption{ A plot of $x_1(T)+x_2(T)$ when patches are connected by dispersal (solid orange) and when isolated (dashed black) after $T=10,000$ time steps. Initial conditions, $(x_1(0), x_2(0))=(55, 54)$.}
\label{disp_total}
\end{figure}

It is then clear to see, by monotonicity of $\sqrt{\cdot}$, that \[\rho(A(0)) \approx  da_1 \approx 32.4 > 1 \ \forall \ r \in (0, 0.5).\]

By Theorem \ref{persthem}, $F$ is uniformly strongly persistent with respect to the total population size. Despite this we can see from Fig. \ref{disp_total} that for certain values of $r$, dispersal can have an increasing, decreasing or neutral effect on the total population size. In our example we see from Fig. \ref{disp_total}, that after some critical value, $r_{c} \approx 0.22$, increasing $r$ has a monotonically increasing effect on the total population size in this context.

\vspace{3mm}

The importance of studying the effect of dispersal on the total population size has been emphasised in \cite{grombach} for planar systems, where the authors assumed each region was a source and dispersal was constant. We have demonstrated, as in \cite{franco} for the case of constant dispersal, that the manner in which a source is coupled to a sink can impact the total population size. 

\vspace{3mm}

We also conducted an analagous simulation for when both regions were sources (see Fig. \ref{sourcepic}). We kept the initial conditions and all parameters the same as in Fig. \ref{disp_total} and set $a_2=10$, i.e. region $2$ is also a source. We can then see that, although $a_1$ is the same as in the previous source-sink case, when we connect two sources by dispersal we can get that the total population size is always below that of when the regions are isolated, for identical initial conditions. An interesting prospect for future work would be to extend the results of \cite{grombach} to investigate the varying cases for total population size when one looks at dispersal (i) only between sources and (ii) between sources and sinks. 
\end{example}

\begin{figure}[!hbt]
\centering
\includegraphics[trim = 5 0 5 0, clip, width = 11cm]{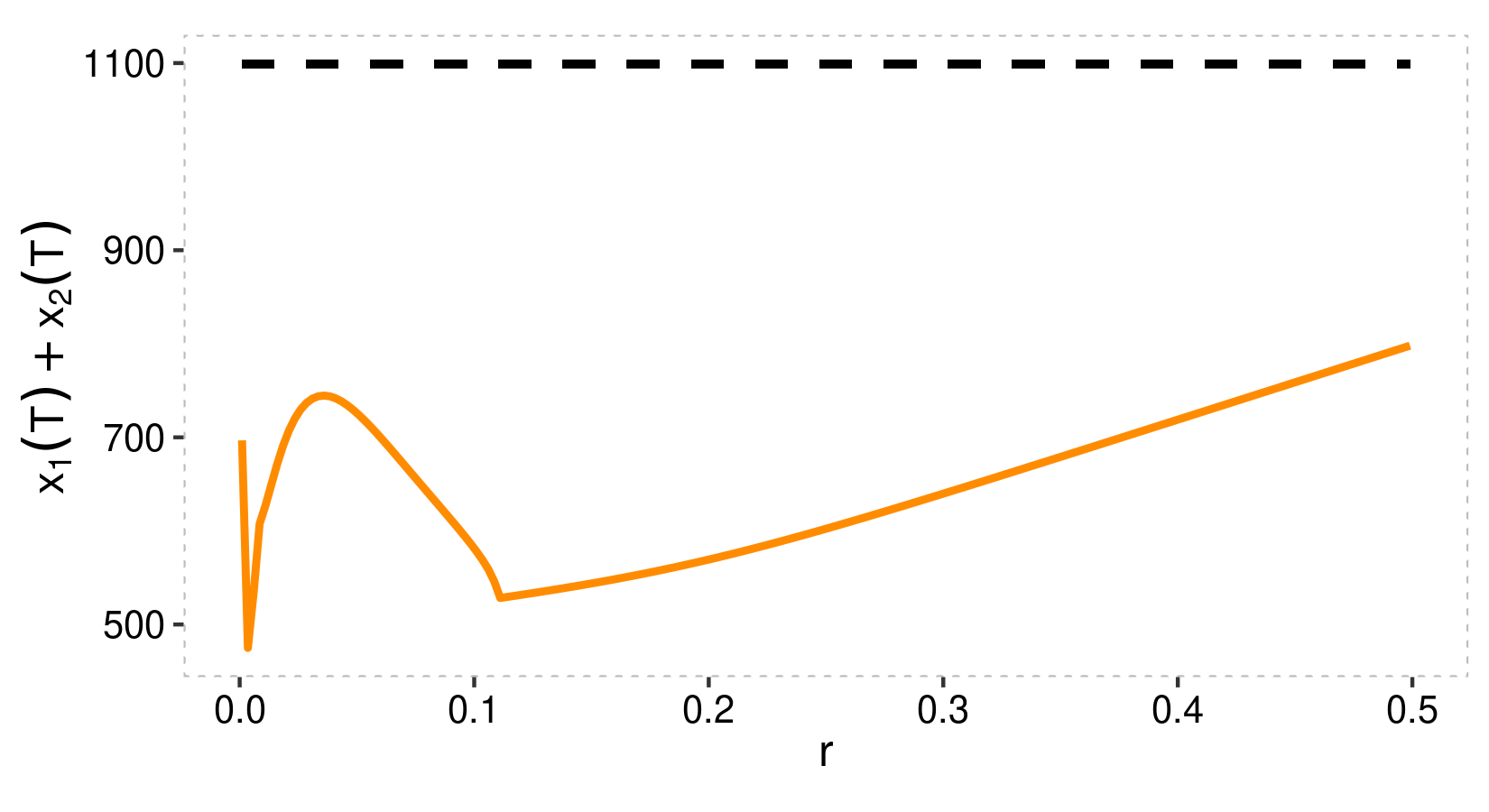}
\caption{ A plot of $x_1(T)+x_2(T)$ when two source regions are connected by dispersal (solid orange) and when isolated (dashed black) after $T=10,000$ time steps. Initial conditions, $(x_1(0), x_2(0))=(55, 54)$, were as in Fig. \ref{disp_total}.}
\label{sourcepic}
\end{figure}

\subsection{Bifurcation Analyses}

In the above numerical simulations we observed that varying dispersal and regional parameter values can result in quite complex dynamical behaviour for trajectories of system (\ref{2by2model}). Therefore in order to briefly explore parameter sensitivity we will conclude this section by looking at single- and two-parameter bifurcation diagrams.

\begin{figure}[!h]
\centering
\includegraphics[trim = 50 5 50 5, clip, width = 12cm]{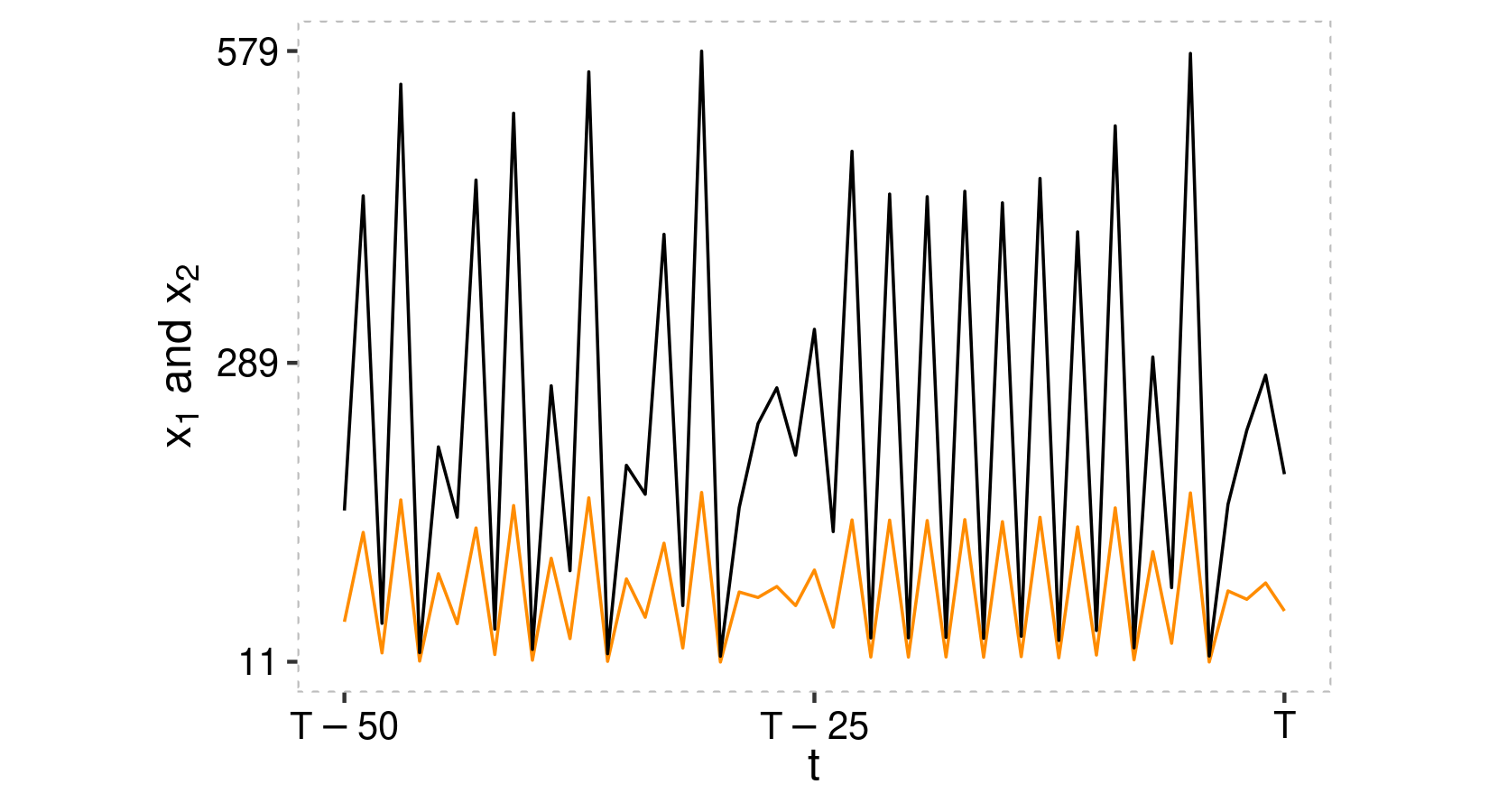}
\caption{Simulated dynamics of (\ref{2by2model}) for the initial condition $(x_1(0), x_2(0))=(131, 19)$, where orange and black trajectories respectively correspond to region $1$ and $2$, where $T=500,000$. We let $a_1=90$, $a_2=0.14$, while all other parameters where as in Fig. \ref{xplots}. The last $50$ time steps are plotted.}
\label{timeseries}
\end{figure}

As $g_i(0) := a_i$ determines stability (of the extinction and/or positive equilibrium) on region $i$, we produced a two-parameter bifurcation diagram for $a_1$ and $a_2$, to observe what range of dynamical behaviour can arise. 

\vspace{3mm}

In \cite{strogatz}, the author informally describes chaotic behaviour as ``\textit{aperiodic long-term behaviour in a deterministic system that exhibits sensitive dependence on initial conditions}". Fig. \ref{timeseries} shows the results of a numerical simulation of \eqref{2by2model} with $a_1=90$ and $a_2=0.14$, and all other parameters as in Fig. \ref{xplots}. We simulated (\ref{2by2model}) for $T = 500,000$ time steps, while also computing the number of unique values attained over $[0,T]$. This resulted in no observed periodic behaviour. We also tested if this aperiodicity is sensitive to choices in initial conditions. In particular, for sufficiently small $\epsilon_i>0$, initial conditions given by $(x_1(0), x_2(0))=(131 \pm \epsilon_1, 19 \pm \epsilon_2)$, resulted in trajectories of (\ref{2by2model}) that enter a different aperiodic regime. These observations suggest that \eqref{2by2model} may be capable of dynamical behaviour similar to that exhibited by chaotic systems. Of course, this is not a fully rigorous analysis and more work is required before a definitive statement on this can be given.

\begin{figure}[!hbt]
\centering
\includegraphics[trim = 0 8 0 8, clip, width = 13cm]{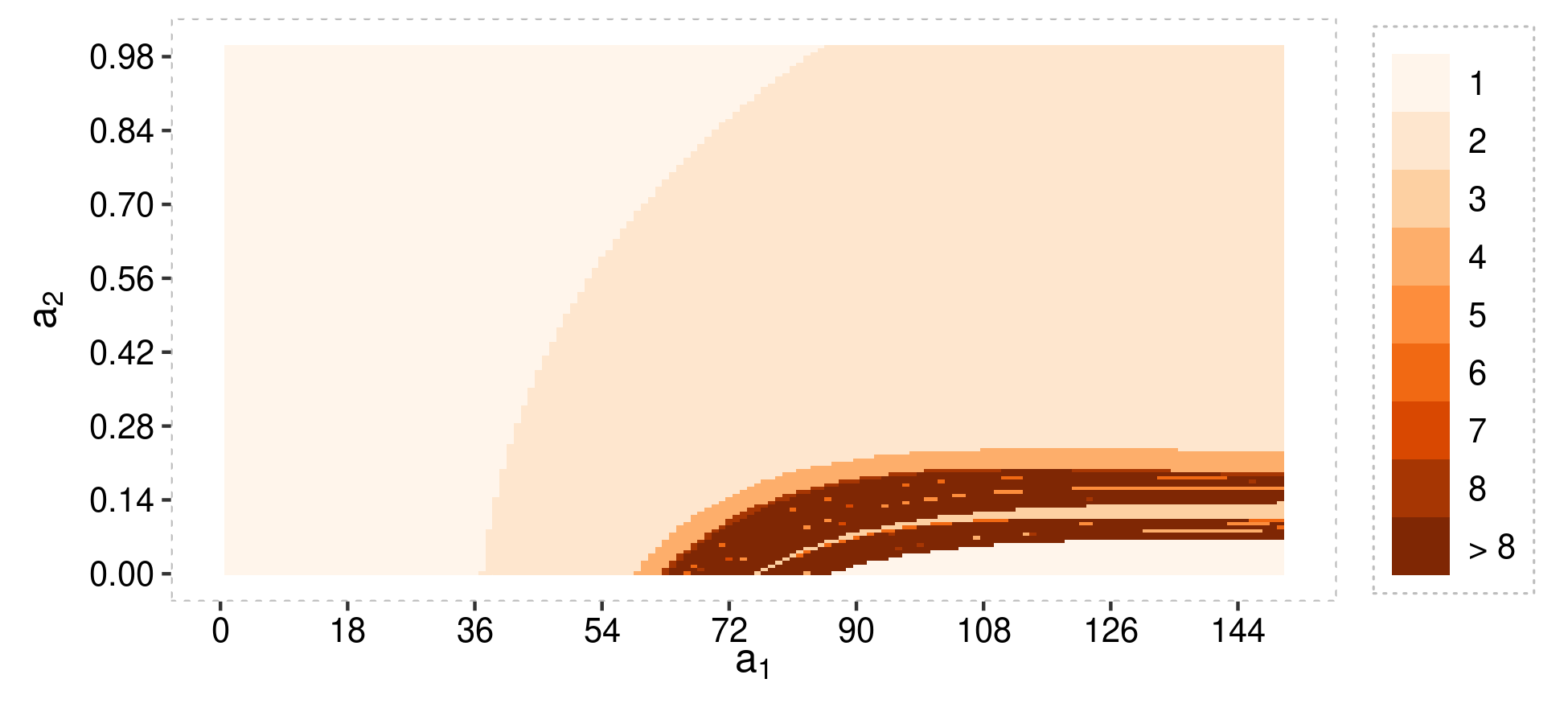}
\caption{Bifurcation diagram for the $(a_1,a_2)$-plane where regions 1 and 2 were respectively given by Ricker and Hassel-1 maps, for $a_1 \in (1,300]$, $a_2 \in (0,1)$. Initial conditions, $(x_1(0), x_2(0))=(131, 19)$. Other parameter values were as in Fig. \ref{xplots}. The colour-number legend shows the period of the various limit cycles ($2$-$8$), with $1$ representing convergence to a unique fixed point (either extinction or a positive equilibrium), and $>8$ representing periods higher than $8$ and possibly chaotic dynamics. Parameter grid resolution was $150 \times 150$.}
\label{aplotpdf}
\end{figure}

In order to produce a two-parameter bifurcation diagram for $a_1$ and $a_2$, we considered the last $100$ observations after $100,000$ iterations (see Fig. \ref{aplotpdf}). All other parameters were fixed as in Fig. \ref{xplots}. We produced bifurcation diagrams for various other initial conditions (different from the one in Fig. \ref{xplots}) and observed similar asymptotic behaviour. In Fig. \ref{aplotpdf} we let $a_1 \in (1,300]$ and $a_2 \in (0,1)$. The colour-number legend in Fig. \ref{aplotpdf} shows the period of the various limit cycles, with $1$ representing convergence to a unique fixed point (either extinction or a positive equilibrium), and $>8$ representing periods higher than $8$ and possibly chaotic dynamics, as chosen also in \cite{vortkamp}.

\begin{figure}[!h]
\centering
\includegraphics[trim = 10 15 10 15, clip, width = 13cm]{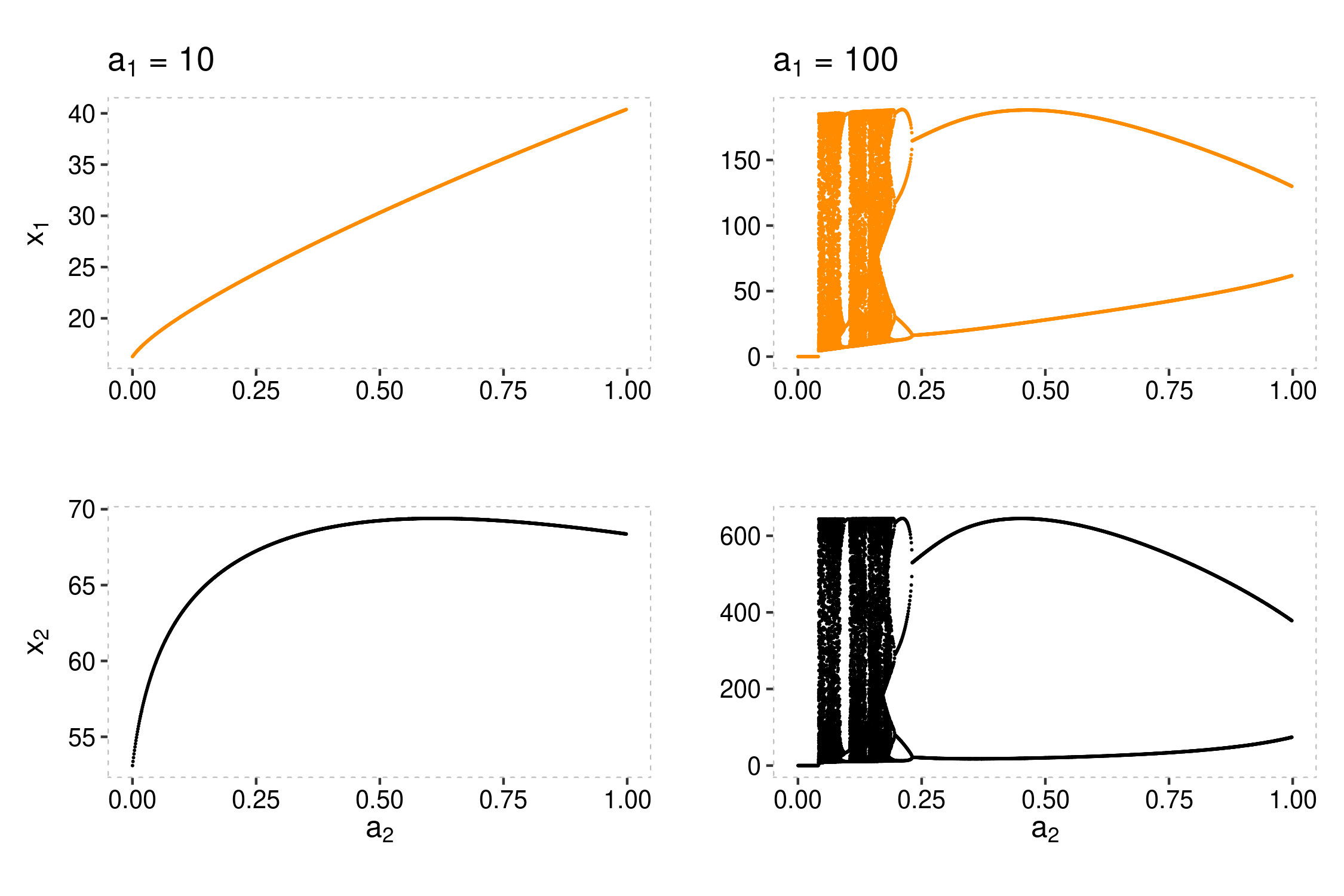}
\caption{ Bifurcation diagrams of $a_2 \in (0,1)$ versus $x_1$ (orange) and $x_2$ (black), where regions $1$ and $2$ were respectively Ricker and Hassell-1 maps. We considered the last $100$ observations after $10,000$ iterations when (left) $a_1=10$ and (right) $a_1=100$. Initial conditions where $(x_1(0), x_2(0))=(20, 10)$ for both scenarios.}
\label{bif_a1a2}
\end{figure}

In Fig. \ref{aplotpdf} we can observe that a large region of the $(a_1,a_2)-$parameter space consists of trajectories that either converge to a fixed point or enter period-$2$ limit cycles. For $a_2 \in (0, 0.28)$ we see a region where period-$2$ limit cycles emerge from chaotic type dynamics. For sufficiently low values of $a_2$ we see a portion of the $(a_1,a_2)$-plane where there is high sensitivity to changes in $a_1$. One moves from period-$2$ limit cycles to regions of higher period and potentially chaotic regimes, followed by regions where trajectories converge to extinction for low enough $a_2$. This diagram demonstrates the complexity that the model we consider can capture. That is, depending on the appropriate choice of both the dispersal and regional maps, deriving simple criteria for local/global stability of extinction or a positive equilibrium, in terms of $a_1$ and $a_2$, is not so trivial.

\vspace{3mm}

We will now observe how varying $a_i$, with $a_j$ fixed, for $i \neq j$, affects the asymptotic dynamics of (\ref{2by2model}). We will do this by plotting a single-parameter bifurcation plot for each $a_i$ vs $x_1$ and $x_2$. First let all parameters except $a_1$ and $a_2$, be as in Fig. \ref{xplots}. Similar to Fig. \ref{aplotpdf}, we considered the last $100$ observations after $100,000$ iterations. We first fix $a_1$ to a relatively low (Fig. \ref{bif_a1a2} (left)) and high value (Fig. \ref{bif_a1a2} (right)), while varying $a_2$ within $(0,1)$. We set $(x_1(0), x_2(0))=(20, 10)$. We produced single-parameter bifurcation diagrams for various other initial conditions and observed similar asymptotic behaviour. For $a_1=10$, Fig. \ref{bif_a1a2} (left), we observed (stable) fixed point regimes for all values of $a_2$ in the interval $(0,1)$. Once we increased $a_1$ to $100$ for small enough $a_2$ we observe stability of the extinction equilibrium and then the appearance of \textit{periodic windows}, where there was period halving bifurcations as $a_2$ increased toward $1$. For all values of $a_2 > c_2 \approx 0.24$, we see the appearance of period-$2$ limit cycles. Thus for a low enough range of $a_2$ values and high enough $a_1$ we see chaotic type dynamics. These then undergo a period-halving route to a (seemingly) locally stable period-$2$ limit cycle.

\begin{figure}[!h]
\centering
\includegraphics[trim = 10 15 10 15, clip, width = 13cm]{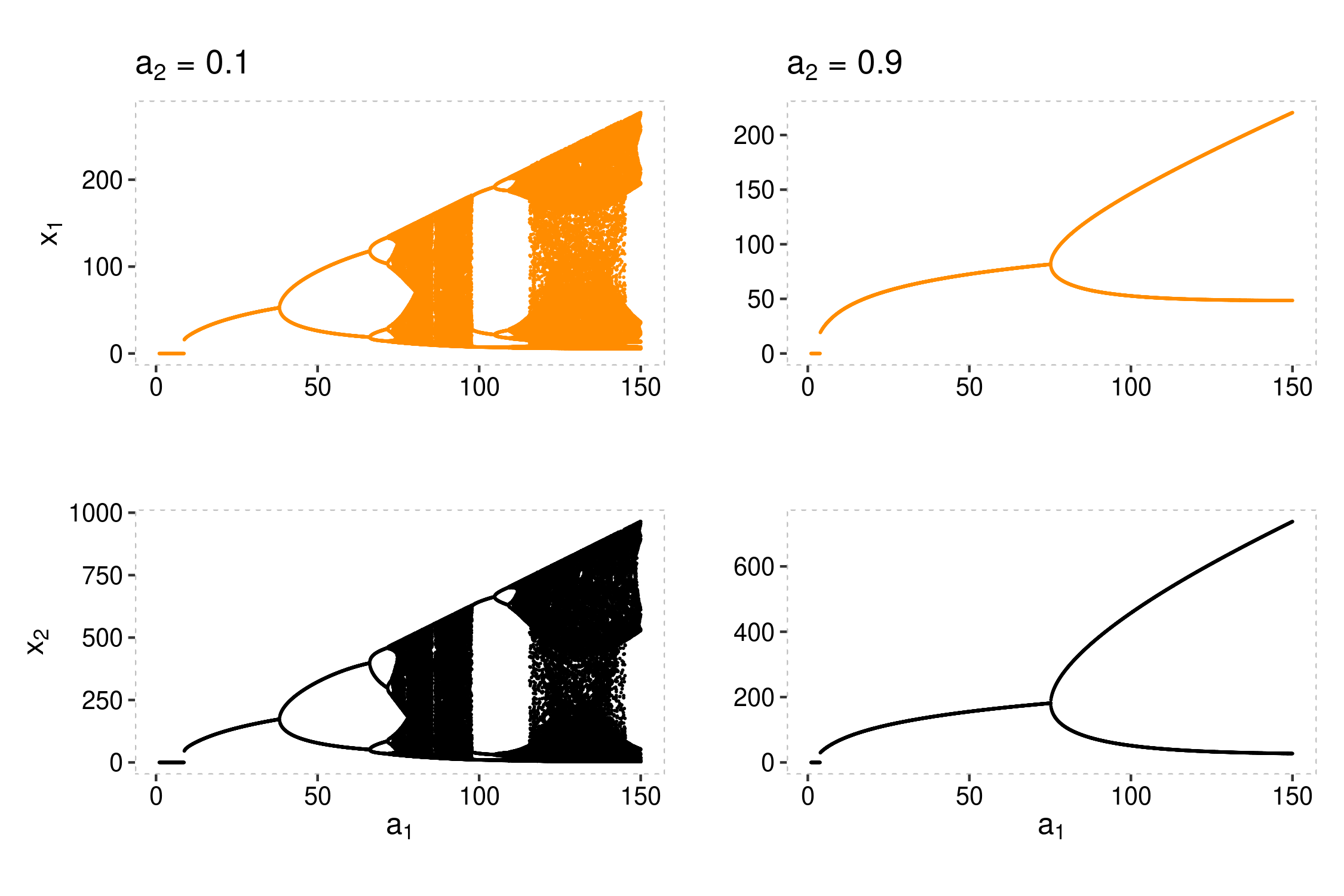}
\caption{ Bifurcation diagrams of $a_1 \in (1,150)$ versus $x_1$ (orange) and $x_2$ (black), where regions $1$ and $2$ were respectively Ricker and Hassell-1 maps. We considered the last $100$ observations after $10,000$ iterations when (left) $a_2=0.1$ and (right) $a_2=0.9$. Initial conditions where $(x_1(0), x_2(0))=(20, 10)$ in both scenarios.}
\label{bif_a2a1}
\end{figure}

We also fixed $a_2$ to a relatively low (Fig. \ref{bif_a2a1} (left)) and high value (Fig. \ref{bif_a2a1} (right)), while varying $a_1$ within $(1,150)$. We set $(x_1(0), x_2(0))=(20, 10)$, as in Fig. \ref{bif_a1a2}. In this case we observed significantly behaviour than in Fig. \ref{bif_a1a2}. For $a_2=0.1$ we observed that extinction was stable for sufficiently small $a_1$ values, with a stable positive fixed point emerging, followed by the appearance of periodic windows. As $a_1$ increased toward $150$ we observed period doubling bifurcations. We then enter a chaotic regime for large $a_1$ values where the dynamics become quite unpredictable. Once we increased $a_2$ to $0.9$ we saw that for low enough values of $a_1$ extinction is stable, and for sufficiently large values of $a_1$ less than $c_1 \approx 75$ we observe stability of a positive equilibrium followed by the emergence of a stable period-$2$ limit cycle.



\section{Conclusion}
Understanding the qualitative dynamics of sub-populations following coupling (via dispersal) is important for numerous ecological applications \cite{grombach, ecomod, kirkland}. In this paper, we considered a nonlinear model of dispersal, giving various sufficient conditions for asymptotic stability, persistence and existence of positive fixed points. We also briefly explored our results via numerical simulations and bifurcation diagrams. Many questions related to this model class are only partially understood. An obvious avenue for future research is to give sufficient conditions for (\ref{gendisp}) to have a unique, LAS or GAS positive equilibrium when each $f_i$ is in $\cal{M}$. Other possibilities also include characterising persistence and stability, and identifying what additional assumptions are needed so that one can guarantee dispersal-induced growth \cite{katriel}.


\section*{Acknowledgements}
BMC is supported by an IRC postgraduate scholarship (GOIPG/2020/939).

\end{document}